%%%%%%%%%%%%%%%%%%%%%%%%%%%%
\documentclass[12pt]{article}

\usepackage{amsfonts,amsmath,amsthm,amssymb,amscd}
\usepackage[dvips]{graphicx}

\setcounter{tocdepth}{1}

\sloppy

\theoremstyle{definition}
\newtheorem{theorem}{Theorem}
\newtheorem{lemma}[theorem]{Lemma}
\newtheorem{proposition}[theorem]{Proposition}
\newtheorem{corollary}[theorem]{Corollary}

% Reset counters in each section
\numberwithin{equation}{section}
\numberwithin{theorem}{section}

% ???E??Q

%%% For 12 point version %%%%%%%%%
%\setlength{\topmargin}{-.5cm}
\setlength{\headheight}{15pt}
\setlength{\footskip}{15pt}
\setlength{\oddsidemargin}{-.3cm}
\setlength{\evensidemargin}{-.3cm}
\setlength{\textwidth}{16cm}
\setlength{\textheight}{20cm}
%\pagestyle{plain}
%%%%%%%%%%%%%%%%%%%%%%%%%%%%%%%%%%

\begin{document}

\begin{center}
{\bf{\Large Analogues of Jacobi's derivative formula}}
\end{center}

\begin{center}
By Kazuhide Matsuda
\end{center}

\begin{center}
Faculty of Fundamental Science, National Institute of Technology, Niihama College,\\
7-1 Yagumo-chou, Niihama, Ehime, Japan, 792-8580. \\
E-mail: matsuda@sci.niihama-nct.ac.jp  \\
Fax: 0897-37-7809 
\end{center}

{\bf Abstract}
In this paper, 
we obtain analogues of Jacobi's derivative formula in terms of the theta constants with rational characteristics. 
For this purpose, we use the arithmetic formulas of the number of representations of a natural number $n,\,\,(n=1,2,\ldots)$ as the sum of two squares, or 
the sum of a square and twice a square, 
which is given by 
\begin{align*}
S_2(n)=&\sharp \{(x,y)\in\mathbb{Z}^2  \,| \, n=x^2+y^2 \}   
        =4\sum_{d|n, \,d:odd} (-1)^{\frac{d-1}{2}},  \\
S_{1,2}(n)=&\sharp \{(x,y)\in\mathbb{Z}^2  \,| \, n=x^2+2y^2 \}
            =2(d_{1,8}(n)+d_{3,8}(n)-d_{5,8}(n)-d_{7,8}(n)),  
\end{align*}
where for the positive integers $j,k,n,$ 
$d_{j,k}(n)$ denotes the number of positive divisors $d$ of $n$ such that $d\equiv j \,\,(\mathrm{mod} \,k).$
\newline
{\bf Key Words:} theta functions; rational characteristics; Jacobi's derivative formula; the sum of two squares.
\newline
{\bf MSC(2010)}  14K25;  11E25

\section{Introduction}
\label{intro}
Throughout this paper, 
set $\mathbb{N}=\{1,2,3,\ldots\}$ and $\mathbb{N}_0=\{0,1,2,3,\ldots\}.$ 
$\mathbb{Z}$ denotes the set of rational integers. 
Furthermore, for the positive integers $j,k,n,$ 
$d_{j,k}(n)$ denotes the number of positive divisors $d$ of $n$ such that $d\equiv j \,\,(\mathrm{mod} \,k).$
\par
Following Farkas and Kra \cite{Farkas-Kra}, 
we introduce the theta function with characteristics, 
which is defined by 
\begin{equation*}
\theta 
\left[
\begin{array}{c}
\epsilon \\
\epsilon^{\prime}
\end{array}
\right] (\zeta, \tau) :=\sum_{n\in\mathbb{Z}} \exp
\left(2\pi i\left[ \frac12\left(n+\frac{\epsilon}{2}\right)^2 \tau+\left(n+\frac{\epsilon}{2}\right)\left(\zeta+\frac{\epsilon^{\prime}}{2}\right) \right] \right), 
\end{equation*}
where $\epsilon, \epsilon^{\prime}\in\mathbb{R}, \, \zeta\in\mathbb{C},$ and $\tau\in\mathbb{H}^{2},$ the upper half plane.  
The theta constants are given by 
\begin{equation*}
\theta 
\left[
\begin{array}{c}
\epsilon \\
\epsilon^{\prime}
\end{array}
\right]
:=
\theta 
\left[
\begin{array}{c}
\epsilon \\
\epsilon^{\prime}
\end{array}
\right] (0, \tau), \quad
%%%%%%%%%%%%%%%%%%%%%%%
\theta^{\prime} 
\left[
\begin{array}{c}
\epsilon \\
\epsilon^{\prime}
\end{array}
\right]
:=\left.
\frac{\partial}{\partial \zeta} 
\theta 
\left[
\begin{array}{c}
\epsilon \\
\epsilon^{\prime}
\end{array}
\right] (\zeta, \tau)
\right|_{\zeta=0}.
\end{equation*}
\par
Farkas and Kra \cite{Farkas-Kra}
treated the theta constants with rational characteristics, 
that is, 
the case where $\epsilon$ and $\epsilon^{\prime}$ are both rational numbers, 
and 
derived a number of interesting theta constant identities. 
\par
Our concern is with Jacobi's derivative formula, which is given by 
\begin{equation}
\label{eqn:Jacobi}
\theta^{\prime} 
\left[
\begin{array}{c}
1 \\
1
\end{array}
\right] 
=
-\pi 
\theta
\left[
\begin{array}{c}
0 \\
0
\end{array}
\right] 
%%%%%%%%%%%%%
\theta
\left[
\begin{array}{c}
1 \\
0
\end{array}
\right] 
%%%%%%%%%%%%%
\theta
\left[
\begin{array}{c}
0 \\
1
\end{array}
\right],  
\end{equation}
which implies that for $q\in\mathbb{C}$ with $|q|<1,$ 
$$
\prod_{n=1}^{\infty} (1-q^n)^3=\sum_{n=0}^{\infty}
(-1)^n(2n+1)q^{n(n+1)/2}. 
$$
This is equivalent to the following identity:
$$
\eta^3(\tau)=\sum_{n=0}^{\infty}
\left(
\frac{-1}{n}
\right)
n 
\exp
\left(
\frac{2\pi i n^2 \tau}{8}
\right),
$$
where 
$
\displaystyle
\eta(\tau)
=
q^{\frac{1}{24}}
\prod_{n=1}^{\infty}(1-q^n)$ with $q=\exp(2\pi i \tau), $ and 
\begin{equation*}
\left(\frac{-1}{n}\right)
=
\begin{cases}
+1, \,\,&\text{if}  \,\,n\equiv 1  \,\,(\mathrm{mod} \, 4),  \\
-1, \,\,&\text{if}  \,\,n\equiv 3 \,\,(\mathrm{mod} \, 4),  \\
0, \,\,&\text{if}  \,\,n\equiv 0 \,\,(\mathrm{mod} \, 2).  \\
\end{cases}
\end{equation*}
For generalizations of this derivative formula to a higher genus, 
see Igusa \cite{Igusa}. 
\par
Farkas \cite{Farkas-2} derived the following theta constant identity:
\begin{align}
&
\frac{
6
\theta^{\prime} 
\left[
\begin{array}{c}
1 \\
\frac13
\end{array}
\right] 
(0,\tau)
}
{
\zeta_6
\theta^3
\left[
\begin{array}{c}
\frac13 \\
\frac13
\end{array}
\right] 
(0,\tau)
+
\theta^3
\left[
\begin{array}{c}
\frac13 \\
1
\end{array}
\right] 
(0,\tau)
+\zeta_6^5
\theta^3
\left[
\begin{array}{c}
\frac13 \\
\frac53
\end{array}
\right] 
(0,\tau)
}    \notag \\
=&
\frac{2\pi i x^{\frac{1}{12}}}
{
\displaystyle
\prod_{n=0}^{\infty} (1-x^{3n+1})(1-x^{3n+2})
}   
=
2\pi i \frac{e^{\frac{\pi i}{6}}}{\sqrt{3}}
\frac
{
\theta
\left[
\begin{array}{c}
1 \\
\frac13
\end{array}
\right] 
(0,\tau)
}
{
\theta
\left[
\begin{array}{c}
\frac13 \\
1
\end{array}
\right] 
(0,3\tau)
}  
=
2\pi i
\frac
{
\theta
\left[
\begin{array}{c}
\frac13 \\
1
\end{array}
\right] 
(0,9\tau)
}
{
\theta
\left[
\begin{array}{c}
\frac13 \\
1
\end{array}
\right] 
(0,3\tau)
},  \label{eqn:Farkas} 
\end{align}
where $x=\exp(2\pi i \tau)$ and $\displaystyle\zeta_6=\exp\left(2\pi i/6\right).$ 
The identity (\ref{eqn:Farkas}) can be viewed as an analogue of Jacobi's derivative formula. 
\par
In this paper, 
we express 
$
\theta^{\prime} 
\left[
\begin{array}{c}
1 \\
1/2
\end{array}
\right], 
%%%%%%%%%%%%%
\theta^{\prime} 
\left[
\begin{array}{c}
1 \\
1/4
\end{array}
\right]
$ 
\normalsize{
and 
}
$
\theta^{\prime} 
\left[
\begin{array}{c}
1 \\
3/4
\end{array}
\right]
$ 
by the theta constants with rational characteristics. 
For this purpose, 
we note that 
\begin{equation}
\theta^{2}
\left[
\begin{array}{c}
0 \\
0
\end{array}
\right](0,\tau)
=
\left(    
\sum_{n\in\mathbb{Z}} x^{n^2}
\right)^2  
=
1+\sum_{n=1}^{\infty} S_2(n) x^n, \,\,x=\exp(\pi i \tau),  
\end{equation}
and
\begin{equation}
\theta
\left[
\begin{array}{c}
0 \\
0
\end{array}
\right](0,\tau)
\theta
\left[
\begin{array}{c}
0 \\
0
\end{array}
\right](0,2\tau)
=
1+\sum_{n=1}^{\infty} S_{1,2}(n) x^n, \,\,x=\exp(\pi i \tau), 
\end{equation}
where 
\begin{equation}
\label{eqn:2-squares}  
S_2(n)=\sharp \{(x,y)\in\mathbb{Z}^2  \,| \, n=x^2+y^2 \}   
        =4\sum_{d|n, \,d:odd} (-1)^{\frac{d-1}{2}},  
\end{equation}
and 
\begin{equation}
\label{eqn:1,2-squares}
S_{1,2}(n)=\sharp \{(x,y)\in\mathbb{Z}^2  \,| \, n=x^2+2y^2 \}
            =2(d_{1,8}(n)+d_{3,8}(n)-d_{5,8}(n)-d_{7,8}(n)).  
\end{equation}
For the proof of equations (\ref{eqn:2-squares}) and (\ref{eqn:1,2-squares}), see Berndt \cite[pp. 56, 74]{Berndt}. 
For the elementary proof, see Dickson \cite[pp.68]{Dickson}. 
\par
Our main theorems are as follows:
%%%%%%%%%%%%%%%%%%%%%%%%
%%%%%%%%%%%%%%%%%%%%%%%%
\begin{theorem}
\label{thm:general-Jacobi-1/2}
{\it
For every $\tau\in\mathbb{H}^2,$ we have 
\begin{equation}
\label{eqn:general-Jacobi-1/2-(1)}  
\theta^{\prime}
\left[
\begin{array}{c}
1 \\
\frac12
\end{array}
\right](0,\tau)
=
-\pi
\theta^{2}
\left[
\begin{array}{c}
0 \\
0
\end{array}
\right](0,2\tau)
\theta
\left[
\begin{array}{c}
1 \\
\frac12
\end{array}
\right](0,\tau), 
\end{equation}
which implies that 
\begin{equation}
\label{eqn:pro-series}
\frac
{
\eta^9(2\tau)
}
{
\eta^3(\tau)\eta^3(4\tau)
}
=
\sum_{n=0}^{\infty}
\left(\frac{-2}{n}\right)
n
\exp
\left(
\frac{2\pi i
n^2\tau
}
{8}
\right),
\end{equation}
where 
$
\displaystyle
\eta(\tau)
=
q^{\frac{1}{24}}
\prod_{n=1}^{\infty}(1-q^n)$ with $q=\exp(2\pi i \tau), $ and 
\begin{equation*}
\left(\frac{-2}{n}\right)
=
\begin{cases}
+1, \,\,&\text{if}  \,\,n\equiv 1 \,\text{or} \, \,3 \,\,(\mathrm{mod} \, 8),  \\
-1, \,\,&\text{if}  \,\,n\equiv 5 \,\text{or} \, \,7 \,\,(\mathrm{mod} \, 8),  \\
0, \,\,&\text{if}  \,\,n\equiv 0 \,\,(\mathrm{mod} \, 2).  \\
\end{cases}
\end{equation*}
}
\end{theorem}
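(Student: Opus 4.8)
The plan is to establish the product identity (\ref{eqn:general-Jacobi-1/2-(1)}) directly from the Jacobi triple product, and then read off (\ref{eqn:pro-series}) by computing $q$-expansions and converting the theta constants into $\eta$-quotients. Throughout I write $x=e^{\pi i\tau}$, as in the text. First I would reduce the left-hand side of (\ref{eqn:general-Jacobi-1/2-(1)}) to a logarithmic derivative of the standard theta-null $\theta\left[\begin{smallmatrix}0\\0\end{smallmatrix}\right]$. Completing the square in the defining series gives the shift formula
$$\theta\left[\begin{smallmatrix}1\\ \frac12\end{smallmatrix}\right](\zeta,\tau)=e^{2\pi i\left(\frac{\tau}{8}+\frac{\zeta}{2}+\frac18\right)}\,\theta\left[\begin{smallmatrix}0\\0\end{smallmatrix}\right]\!\left(\zeta+\tfrac14+\tfrac{\tau}{2},\tau\right).$$
Taking $\partial_\zeta\log$ and setting $\zeta=0$, the prefactor contributes a constant $\pi i$ and the factor $\theta\left[\begin{smallmatrix}1\\ \frac12\end{smallmatrix}\right](0,\tau)$ splits off, so it remains to prove
$$\pi i+\frac{\partial_\zeta\theta\left[\begin{smallmatrix}0\\0\end{smallmatrix}\right]\!\left(\tfrac14+\tfrac{\tau}{2},\tau\right)}{\theta\left[\begin{smallmatrix}0\\0\end{smallmatrix}\right]\!\left(\tfrac14+\tfrac{\tau}{2},\tau\right)}=-\pi\,\theta^2\left[\begin{smallmatrix}0\\0\end{smallmatrix}\right](0,2\tau).$$

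The core step is to evaluate this logarithmic derivative using the product expansion $\theta\left[\begin{smallmatrix}0\\0\end{smallmatrix}\right](\zeta,\tau)=\prod_{n\geq1}(1-x^{2n})(1+x^{2n-1}e^{2\pi i\zeta})(1+x^{2n-1}e^{-2\pi i\zeta})$. At the point $\zeta=\tfrac14+\tfrac{\tau}{2}$ one has $e^{2\pi i\zeta}=ix$, which turns the two telescoping sums into geometric-type series in $ix^{2n}$ and $-ix^{2n-2}$. The subtle point, and the one I expect to be the main obstacle, is the boundary term: the $n=1$ summand of the second series is $O(1)$ rather than exponentially small, and its imaginary part exactly cancels the leading $\pi i$ while its real part produces the constant $-\pi$. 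After this cancellation the surviving terms combine, via $\tfrac{1}{1+ix^{2n}}+\tfrac{1}{1-ix^{2n}}=\tfrac{2}{1+x^{4n}}$, into the Lambert series $-\pi\bigl(1+4\sum_{n\geq1}\tfrac{x^{2n}}{1+x^{4n}}\bigr)$. Expanding $\tfrac{x^{2n}}{1+x^{4n}}=\sum_{k\geq0}(-1)^kx^{2n(2k+1)}$ and collecting terms by $N=n(2k+1)$ identifies the bracket with $1+\sum_{N\geq1}S_2(N)x^{2N}$, which by the two-squares formula (\ref{eqn:2-squares}) is precisely $\theta^2\left[\begin{smallmatrix}0\\0\end{smallmatrix}\right](0,2\tau)$. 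This proves (\ref{eqn:general-Jacobi-1/2-(1)}) wherever $\theta\left[\begin{smallmatrix}1\\ \frac12\end{smallmatrix}\right](0,\tau)\neq0$, and hence everywhere by holomorphy in $\tau$.

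Finally, to obtain (\ref{eqn:pro-series}) I would compute both sides of (\ref{eqn:general-Jacobi-1/2-(1)}) explicitly. Differentiating the defining series termwise and pairing the summation index $n$ with $-1-n$ converts the $\zeta$-derivative into $-2\pi\sum_{m>0,\,\mathrm{odd}}m\sin(\tfrac{\pi m}{4})x^{m^2/4}$; since $\sin(\tfrac{\pi m}{4})=\tfrac{1}{\sqrt2}\left(\frac{-2}{m}\right)$, this gives $\theta'\left[\begin{smallmatrix}1\\ \frac12\end{smallmatrix}\right](0,\tau)=-\sqrt2\,\pi\sum_{n\geq0}\left(\frac{-2}{n}\right)n\exp(2\pi in^2\tau/8)$. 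For the right-hand side I would record the $\eta$-quotients $\theta\left[\begin{smallmatrix}0\\0\end{smallmatrix}\right](0,2\tau)=\eta(2\tau)^5/\bigl(\eta(\tau)^2\eta(4\tau)^2\bigr)$ and, from the same product evaluation as above (where $\theta\left[\begin{smallmatrix}0\\0\end{smallmatrix}\right](\tfrac14+\tfrac\tau2,\tau)$ collapses to $(1-i)\prod_{n\geq1}(1-x^{2n})(1+x^{4n})$), the formula $\theta\left[\begin{smallmatrix}1\\ \frac12\end{smallmatrix}\right](0,\tau)=\sqrt2\,\eta(\tau)\eta(4\tau)/\eta(2\tau)$. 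Substituting these into (\ref{eqn:general-Jacobi-1/2-(1)}), the powers of $\sqrt2$ cancel and the product of $\eta$-quotients collapses to $\eta(2\tau)^9/\bigl(\eta(\tau)^3\eta(4\tau)^3\bigr)$; comparing with the series for $\theta'\left[\begin{smallmatrix}1\\ \frac12\end{smallmatrix}\right](0,\tau)$ then yields (\ref{eqn:pro-series}).
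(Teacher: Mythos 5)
Your proposal is correct and follows essentially the same route as the paper: for (\ref{eqn:general-Jacobi-1/2-(1)}) the heart of your argument is the logarithmic derivative of the Jacobi triple product evaluated where $z=ix$, reduced to the Lambert series $1+4\sum_{n\ge 1}x^{2n}/(1+x^{4n})$ and identified with $\theta^{2}\left[\begin{smallmatrix}0\\ 0\end{smallmatrix}\right](0,2\tau)$ via the two-squares formula (\ref{eqn:2-squares}), exactly as in the paper, and your derivation of (\ref{eqn:pro-series}) likewise combines the termwise-differentiated series for $\theta^{\prime}\left[\begin{smallmatrix}1\\ \frac12\end{smallmatrix}\right](0,\tau)$ with product expansions of the theta constants. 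The only cosmetic differences are that you translate $\theta\left[\begin{smallmatrix}1\\ \frac12\end{smallmatrix}\right]$ into a shifted $\theta\left[\begin{smallmatrix}0\\ 0\end{smallmatrix}\right]$ before invoking the triple product (the paper applies the triple product to $\theta\left[\begin{smallmatrix}1\\ \frac12\end{smallmatrix}\right]$ directly, so the same boundary term $i/(1-i)$ appears there), and that you organize the passage to (\ref{eqn:pro-series}) through explicit $\eta$-quotients at $\tau,2\tau,4\tau$ rather than the paper's substitutions $q\mapsto q^{8}$, $\tau\mapsto\tau/8$.
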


\begin{theorem}
\label{thm:general-Jacobi-1/4,3/4}
{\it
For every $\tau\in\mathbb{H}^2,$ 
we have 
\begin{equation*}
\theta^{\prime}
\left[
\begin{array}{c}
1 \\
\frac14
\end{array}
\right](0,\tau)
=
-
\pi
\theta
\left[
\begin{array}{c}
1 \\
\frac14
\end{array}
\right](0,\tau)
\theta
\left[
\begin{array}{c}
0 \\
0
\end{array}
\right](0,4\tau)
\left\{
\sqrt{2}
\theta
\left[
\begin{array}{c}
0 \\
0
\end{array}
\right](0,2\tau)
-
\theta
\left[
\begin{array}{c}
0 \\
0
\end{array}
\right](0,4\tau)
\right\},  
\end{equation*}
and
\begin{equation*}
\theta^{\prime}
\left[
\begin{array}{c}
1 \\
\frac34
\end{array}
\right](0,\tau)
=
-
\pi
\theta
\left[
\begin{array}{c}
1 \\
\frac34
\end{array}
\right](0,\tau)
\theta
\left[
\begin{array}{c}
0 \\
0
\end{array}
\right](0,4\tau)
\left\{
\sqrt{2}
\theta
\left[
\begin{array}{c}
0 \\
0
\end{array}
\right](0,2\tau)
+
\theta
\left[
\begin{array}{c}
0 \\
0
\end{array}
\right](0,4\tau)
\right\}.
\end{equation*}
}
\end{theorem}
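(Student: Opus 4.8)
The plan is to deduce both formulas from a single computation of the logarithmic derivative $L(v):=\theta'\left[\begin{smallmatrix}1\\ v\end{smallmatrix}\right](0,\tau)\big/\theta\left[\begin{smallmatrix}1\\ v\end{smallmatrix}\right](0,\tau)$ at the two values $v=1/4$ and $v=3/4$. Because the characteristic $\epsilon'$ enters the defining series only through the combination $\zeta+\epsilon'/2$, one has the translation identity $\theta\left[\begin{smallmatrix}1\\ v\end{smallmatrix}\right](\zeta,\tau)=\theta\left[\begin{smallmatrix}1\\ 0\end{smallmatrix}\right](\zeta+v/2,\tau)$, so that $L(v)=\frac{\partial}{\partial\zeta}\log\theta\left[\begin{smallmatrix}1\\ 0\end{smallmatrix}\right](\zeta,\tau)\big|_{\zeta=v/2}$. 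Since $\theta\left[\begin{smallmatrix}1\\ v\end{smallmatrix}\right](0,\tau)$ has nonzero leading $q$-coefficient, I may divide each asserted identity by this factor; the theorem then becomes equivalent to the two formulas $L(1/4)=-\pi\theta\left[\begin{smallmatrix}0\\ 0\end{smallmatrix}\right](0,4\tau)\{\sqrt2\,\theta\left[\begin{smallmatrix}0\\ 0\end{smallmatrix}\right](0,2\tau)-\theta\left[\begin{smallmatrix}0\\ 0\end{smallmatrix}\right](0,4\tau)\}$ and its companion for $L(3/4)$ with $+$ replacing $-$ in the brace.

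Next I would expand $L(v)$ as a Lambert series. The function $\theta\left[\begin{smallmatrix}1\\ 0\end{smallmatrix}\right]$ is even in $\zeta$, and applying the Jacobi triple product and differentiating $\log$ term by term gives
$$L(v)=-\pi\tan\frac{\pi v}{2}+4\pi\sum_{k=1}^{\infty}(-1)^{k}\,\frac{q^{k}}{1-q^{k}}\,\sin(\pi k v),\qquad q=e^{2\pi i\tau},$$
where the tangent arises from the finite factor $z^{1/2}+z^{-1/2}$ (with $z=e^{2\pi i\zeta}$) and the Lambert sum from the infinite product.

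Then I would pass to the sum and difference $L(1/4)\pm L(3/4)$. The weights $\sin(\pi k/4)\pm\sin(3\pi k/4)$ depend only on $k\bmod 8$: the difference is supported on $k\equiv 2,6\ (\mathrm{mod}\ 8)$ and the sum on the odd residues. Writing $q^{k}/(1-q^{k})=\sum_{m\ge1}q^{km}$ turns each surviving sum into a divisor count, and a short bookkeeping step identifies the coefficient of $q^{n}$ in the difference with $d_{1,4}(n)-d_{3,4}(n)=\tfrac14 S_2(n)$ and, for the sum, with $d_{1,8}(n)+d_{3,8}(n)-d_{5,8}(n)-d_{7,8}(n)=\tfrac12 S_{1,2}(n)$. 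Matching these against the $q$-expansions for $S_2$ and $S_{1,2}$ recalled in the introduction (with $\tau$ replaced by $2\tau$, so that $x=e^{\pi i\tau}$ obeys $x^{2}=q$), while letting the constants $\tan(\pi/8)=\sqrt2-1$ and $\tan(3\pi/8)=\sqrt2+1$ supply the $n=0$ terms, should yield
$$L(\tfrac14)-L(\tfrac34)=2\pi\,\theta^{2}\left[\begin{smallmatrix}0\\ 0\end{smallmatrix}\right](0,4\tau),\qquad L(\tfrac14)+L(\tfrac34)=-2\sqrt2\,\pi\,\theta\left[\begin{smallmatrix}0\\ 0\end{smallmatrix}\right](0,2\tau)\,\theta\left[\begin{smallmatrix}0\\ 0\end{smallmatrix}\right](0,4\tau).$$
Solving this linear system and multiplying back by $\theta\left[\begin{smallmatrix}1\\ v\end{smallmatrix}\right](0,\tau)$ produces the two identities of the theorem.

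I expect the main obstacle to be the reconciliation of the constant terms: one must verify that the $-\pi\tan(\pi v/2)$ contributions fuse with the $n=0$ terms of the divisor series so that each combination acquires exactly the normalization $1+\sum(\cdots)$ of the arithmetic expansions, and that the factors of $\sqrt2$ coming from $\tan(\pi/8)$ and $\tan(3\pi/8)$ attach to the correct theta products with the correct signs. The remaining ingredients—the Jacobi triple product computation, the $\bmod\ 8$ trigonometric table, and the divisor identifications—are routine, but the sign and $\sqrt2$ bookkeeping is where errors would most easily creep in.
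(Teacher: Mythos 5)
Your proposal is correct and takes essentially the same route as the paper: the paper's Proposition \ref{prop:1/4-pm-3/4} establishes exactly your two identities for the difference and sum of the logarithmic derivatives $L(1/4)\mp L(3/4)$ via the Jacobi triple product, the mod-$8$ trigonometric bookkeeping, and the arithmetic formulas for $S_2(n)$ and $S_{1,2}(n)$, after which the theorem follows by solving the same $2\times 2$ linear system. The only cosmetic difference is that you package both logarithmic derivatives into a single Lambert series $L(v)$ (via the translation in $\epsilon'$) rather than expanding each characteristic separately, and your constant terms $-\pi\tan(\pi v/2)$ agree with the paper's $2\pi i\left(\frac{i}{\sqrt 2}\mp\frac{i}{2}\right)$.
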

Equation (\ref{eqn:pro-series}) was proved by Zucker \cite{Zucker1}, \cite{Zucker2} and 
K\"ohler \cite{Kohler}. 
Noted is that they did not use theta functions with rational characteristics. 
Therefore, 
we believe that equation (\ref{eqn:general-Jacobi-1/2-(1)}) is new. 
For the theory of the eta products and theta series identities, 
see K\"ohler \cite{Kohler2}. 
\par
This paper is organized as follows. In Section 2, 
we review the properties of the theta functions. 
In Sections 3 and 4, we prove Theorems \ref{thm:general-Jacobi-1/2} and \ref{thm:general-Jacobi-1/4,3/4}. 
In Section 3, we especially note that equation (\ref{eqn:general-Jacobi-1/2-(1)}) has a relationship with 
a certain kind of partion number. 
In Section 5, we derive more theta constant identities.

%%%%%%%%%%%%%%%%%%%%%%
%%%%%%%%%%%%%%%%%%%%%%

\subsubsection*{Acknowledgments}
We are grateful to Professor Nakayashiki and Professor Nishiyama for their useful comments. 
We also thank the referee for recommending various improvements to the paper.

\section{The properties of the theta functions}
\label{sec:properties}

\subsection{Basic properties}
We first note that 
for $m,n\in\mathbb{Z},$ 
\begin{equation}
\label{eqn:integer-char}
\theta 
\left[
\begin{array}{c}
\epsilon \\
\epsilon^{\prime}
\end{array}
\right] (\zeta+n+m\tau, \tau) =
\exp(2\pi i)\left[\frac{n\epsilon-m\epsilon^{\prime}}{2}-mz-\frac{m^2\tau}{2}\right]
\theta 
\left[
\begin{array}{c}
\epsilon \\
\epsilon^{\prime}
\end{array}
\right] (\zeta,\tau),
\end{equation}
and 
\begin{equation}
\theta 
\left[
\begin{array}{c}
\epsilon +2m\\
\epsilon^{\prime}+2n
\end{array}
\right] 
(\zeta,\tau)
=\exp(\pi i \epsilon n)
\theta 
\left[
\begin{array}{c}
\epsilon \\
\epsilon^{\prime}
\end{array}
\right] 
(\zeta,\tau).
\end{equation}
Furthermore, 
\begin{equation*}
\theta 
\left[
\begin{array}{c}
-\epsilon \\
-\epsilon^{\prime}
\end{array}
\right] (\zeta,\tau)
=
\theta 
\left[
\begin{array}{c}
\epsilon \\
\epsilon^{\prime}
\end{array}
\right] (-\zeta,\tau)
\,\,
\mathrm{and}
\,\,
\theta^{\prime} 
\left[
\begin{array}{c}
-\epsilon \\
-\epsilon^{\prime}
\end{array}
\right] (\zeta,\tau)
=
-
\theta^{\prime} 
\left[
\begin{array}{c}
\epsilon \\
\epsilon^{\prime}
\end{array}
\right] (-\zeta,\tau).
\end{equation*}
%%%%%%%%%%%%%%%%
\par
For $m,n\in\mathbb{R},$ 
we see that 
\begin{align}
\label{eqn:real-char}
&\theta 
\left[
\begin{array}{c}
\epsilon \\
\epsilon^{\prime}
\end{array}
\right] \left(\zeta+\frac{n+m\tau}{2}, \tau\right)   \notag\\
&=
\exp(2\pi i)\left[
-\frac{mz}{2}-\frac{m^2\tau}{8}-\frac{m(\epsilon^{\prime}+n)}{4}
\right]
\theta 
\left[
\begin{array}{c}
\epsilon+m \\
\epsilon^{\prime}+n
\end{array}
\right] 
(\zeta,\tau). 
\end{align}
%%%%%%%%%%%%%%%%%%%%%%%%%%%%
We note that 
$\theta 
\left[
\begin{array}{c}
\epsilon \\
\epsilon^{\prime}
\end{array}
\right] \left(\zeta, \tau\right)$ has only one zero in the fundamental parallelogram, 
which is given by 
$
\displaystyle
\zeta=\frac{1-\epsilon}{2}\tau+\frac{1-\epsilon^{\prime}}{2}. 
$

\subsection{Jacobi's triple product identity}

All the theta functions have infinite product expansions, which are given by 
\begin{align}
\theta 
\left[
\begin{array}{c}
\epsilon \\
\epsilon^{\prime}
\end{array}
\right] (\zeta, \tau) &=\exp\left(\frac{\pi i \epsilon \epsilon^{\prime}}{2}\right) x^{\frac{\epsilon^2}{4}} z^{\frac{\epsilon}{2}}    \notag  \\
                           &\quad 
                           \displaystyle \prod_{n=1}^{\infty}(1-x^{2n})(1+e^{\pi i \epsilon^{\prime}} x^{2n-1+\epsilon} z)(1+e^{-\pi i \epsilon^{\prime}} x^{2n-1-\epsilon}/z),  \label{eqn:Jacobi-triple}
\end{align}
where $x=\exp(\pi i \tau)$ and $z=\exp(2\pi i \zeta).$

\subsection{Spaces of $N$-th order $\theta$-functions}

Following Farkas and Kra \cite{Farkas-Kra}, 
we define 
$\mathcal{F}_{N}\left[
\begin{array}{c}
\epsilon \\
\epsilon^{\prime}
\end{array}
\right] $ to be the set of entire functions $f$ that satisfy the two functional equations, 
$$
f(\zeta+1)=\exp(\pi i \epsilon) \,\,f(\zeta),
$$
and 
$$
f(\zeta+\tau)=\exp(-\pi i)[\epsilon^{\prime}+2N\zeta+N\tau] \,\,f(\zeta), \quad \zeta\in\mathbb{C},  \,\,\tau \in\mathbb{H}^2,
$$ 
where 
$N$ is a positive integer and 
$\left[
\begin{array}{c}
\epsilon \\
\epsilon^{\prime}
\end{array}
\right] \in\mathbb{R}^2.$ 
This set of functions is called the space of {\it $N$-th order $\theta$-functions with characteristic }
$\left[
\begin{array}{c}
\epsilon \\
\epsilon^{\prime}
\end{array}
\right]. $ 
Note that 
$$
\dim \mathcal{F}_{N}\left[
\begin{array}{c}
\epsilon \\
\epsilon^{\prime}
\end{array}
\right] =N.
$$
For its proof, see Farkas and Kra \cite[pp.133]{Farkas-Kra}.

\subsection{Lemma of Farkas and Kra}

We recall the lemma of Farkas and Kra \cite[pp.78]{Farkas-Kra}. 

\begin{lemma}
\label{lem:Farkas-Kra}
{\it
For 
all characteristics 
$
\left[
\begin{array}{c}
\epsilon \\
\epsilon^{\prime}
\end{array}
\right], 
\left[
\begin{array}{c}
\delta \\
\delta^{\prime}
\end{array}
\right]
$ 
and 
all $\tau\in\mathbb{H}^2,$ 
we have
\begin{align*}
&\theta
\left[
\begin{array}{c}
\epsilon \\
\epsilon^{\prime}
\end{array}
\right](0,\tau)
%%%%%%%%%%%%%%%%%
\theta
\left[
\begin{array}{c}
\delta \\
\delta^{\prime}
\end{array}
\right](0,\tau)  \\
=&
\theta
\left[
\begin{array}{c}
\frac{\epsilon+\delta}{2} \\
\epsilon^{\prime}+\delta^{\prime}
\end{array}
\right](0,2\tau)
%%%%%%%%%%%
\theta
\left[
\begin{array}{c}
\frac{\epsilon-\delta}{2} \\
\epsilon^{\prime}-\delta^{\prime}
\end{array}
\right](0,2\tau)  
+
\theta
\left[
\begin{array}{c}
\frac{\epsilon+\delta}{2}+1 \\
\epsilon^{\prime}+\delta^{\prime}
\end{array}
\right](0,2\tau)
%%%%%%%%%%%
\theta
\left[
\begin{array}{c}
\frac{\epsilon-\delta}{2}+1 \\
\epsilon^{\prime}-\delta^{\prime}
\end{array}
\right](0,2\tau). 
\end{align*}
}
\end{lemma}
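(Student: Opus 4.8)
The plan is to prove the identity directly from the series definition of the theta constants by a change of summation variables, which is the standard route for such addition/duplication-type theta identities. Writing each factor on the left-hand side as its Fourier series at $\zeta=0$ and multiplying, the product becomes the absolutely convergent double sum
\begin{equation*}
\sum_{m,n\in\mathbb{Z}}\exp\left(2\pi i\left[\frac{\tau}{2}\left\{\left(m+\tfrac{\epsilon}{2}\right)^2+\left(n+\tfrac{\delta}{2}\right)^2\right\}+\frac{\epsilon'}{2}\left(m+\tfrac{\epsilon}{2}\right)+\frac{\delta'}{2}\left(n+\tfrac{\delta}{2}\right)\right]\right).
\end{equation*}
Absolute convergence will let me rearrange the terms freely throughout.

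Next I would apply the polarization identity $a^2+b^2=\tfrac12(a+b)^2+\tfrac12(a-b)^2$ with $a=m+\epsilon/2$ and $b=n+\delta/2$, together with the matching decomposition of the two linear terms, namely $\tfrac{\epsilon'}{2}a+\tfrac{\delta'}{2}b=\tfrac{a+b}{2}\cdot\tfrac{\epsilon'+\delta'}{2}+\tfrac{a-b}{2}\cdot\tfrac{\epsilon'-\delta'}{2}$. This rewrites the summand entirely in terms of $a+b=(m+n)+\tfrac{\epsilon+\delta}{2}$ and $a-b=(m-n)+\tfrac{\epsilon-\delta}{2}$, and the quadratic $\tau$-weight becomes $\tfrac{\tau}{4}(a+b)^2+\tfrac{\tau}{4}(a-b)^2=\tfrac{2\tau}{2}\big(\tfrac{a+b}{2}\big)^2+\tfrac{2\tau}{2}\big(\tfrac{a-b}{2}\big)^2$, which is precisely the modulus $2\tau$ appearing on the right-hand side. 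Observe that the resulting summand factors as a function of $s:=m+n$ times a function of $t:=m-n$.

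Then I perform the substitution $(m,n)\mapsto(s,t)=(m+n,m-n)$. As $(m,n)$ runs over $\mathbb{Z}^2$, the pair $(s,t)$ runs bijectively over the index set $\{(s,t)\in\mathbb{Z}^2:s\equiv t\ (\mathrm{mod}\ 2)\}$, so I split the double sum into the two residue classes $s\equiv t\equiv 0$ and $s\equiv t\equiv 1\ (\mathrm{mod}\ 2)$; in each class $s$ and $t$ range independently, so the factorization noted above turns each class into a product of two single sums. In the even class, writing $s=2k$ and $t=2\ell$, one gets $\tfrac{a+b}{2}=k+\tfrac12\cdot\tfrac{\epsilon+\delta}{2}$ and $\tfrac{a-b}{2}=\ell+\tfrac12\cdot\tfrac{\epsilon-\delta}{2}$, so the product is exactly the theta constant at modulus $2\tau$ with top characteristic $\tfrac{\epsilon+\delta}{2}$ and bottom characteristic $\epsilon'+\delta'$ times the one with top characteristic $\tfrac{\epsilon-\delta}{2}$ and bottom characteristic $\epsilon'-\delta'$; this is the first term on the right. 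In the odd class, writing $s=2k+1$ and $t=2\ell+1$, the extra half-integer shift replaces the top characteristics by $\tfrac{\epsilon+\delta}{2}+1$ and $\tfrac{\epsilon-\delta}{2}+1$ while leaving the bottom characteristics unchanged, producing the second term. Summing the two classes yields the claimed identity.

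I expect the only genuine obstacle to be the bookkeeping of the linear ($\epsilon',\delta'$) terms: one must verify that after the substitution the coefficient of $\tfrac{a+b}{2}$ is exactly $\tfrac{\epsilon'+\delta'}{2}$ and that of $\tfrac{a-b}{2}$ is exactly $\tfrac{\epsilon'-\delta'}{2}$, so that the reassembled single sums genuinely match the definition of the theta constants at $2\tau$ with the stated second-row characteristics, and that the $+1$ shift of the top characteristic in the odd class is consistent with the formula as written. Everything else is routine rearrangement justified by absolute convergence.
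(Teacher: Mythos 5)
Your proof is correct, but note that there is nothing in the paper to compare it against: the paper does not prove this lemma at all, it simply quotes it from Farkas and Kra \cite{Farkas-Kra}. Your argument is the standard one (and is essentially the proof in that reference): write the product of the two theta constants as an absolutely convergent double sum, polarize via $a^{2}+b^{2}=\tfrac12(a+b)^{2}+\tfrac12(a-b)^{2}$ with $a=m+\epsilon/2$, $b=n+\delta/2$, split the linear terms in the parallel way, and pass to $(s,t)=(m+n,m-n)$, which ranges over the pairs with $s\equiv t \pmod 2$; the even class reassembles into the first product at modulus $2\tau$ and the odd class into the second, the $+1$ shift of the upper characteristics absorbing the half-integer offset. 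The bookkeeping you flagged as the only delicate point does close up: the constant phase on the left is
\begin{equation*}
2\pi i\left(\frac{\epsilon\epsilon^{\prime}}{4}+\frac{\delta\delta^{\prime}}{4}\right),
\end{equation*}
while the first term on the right carries
\begin{equation*}
2\pi i\left(\frac{(\epsilon+\delta)(\epsilon^{\prime}+\delta^{\prime})}{8}+\frac{(\epsilon-\delta)(\epsilon^{\prime}-\delta^{\prime})}{8}\right),
\end{equation*}
and these agree; likewise the coefficient of $\frac{a+b}{2}$ after polarization is exactly $\frac{\epsilon^{\prime}+\delta^{\prime}}{2}$, which is precisely what the series defining the theta constant with lower characteristic $\epsilon^{\prime}+\delta^{\prime}$ at modulus $2\tau$ requires. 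So your outline, once the routine expansions are displayed, is a complete self-contained proof of a statement the paper leaves entirely to the literature.
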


\section{On Theorem \ref{thm:general-Jacobi-1/2}}

\subsection{Proof of equation (\ref{eqn:general-Jacobi-1/2-(1)})}

Using equation (\ref{eqn:2-squares}), 
we prove Theorem \ref{thm:general-Jacobi-1/2}.

\begin{proof}
The Jacobi triple product identity (\ref{eqn:Jacobi-triple}) yields 
\begin{align*}
\frac{
\theta^{\prime}
\left[
\begin{array}{c}
1 \\
\frac12
\end{array}
\right](0,\tau)  
}
{
\theta
\left[
\begin{array}{c}
1 \\
\frac12
\end{array}
\right](0,\tau)  
}
=&
\left.
\frac{d}{d\zeta} 
\log 
\theta
\left[
\begin{array}{c}
1 \\
\frac12
\end{array}
\right](\zeta,\tau)  
\right|_{\zeta=0}  \\
=&
\left.
\frac{dz}{d\zeta} \cdot
\frac{d}{dz} 
\log 
\left(
e^{\frac{\pi i}{4}} x^{1/4} z^{1/2} 
\prod_{n=1}^{\infty}
(1-x^{2n})(1+ix^{2n} z)(1-i \frac{x^{2n-2}}{z})
\right)
\right|_{\zeta=0} \\
=&
\left.
2\pi i z
\left\{
\frac{1}{2z}
+
\sum_{n=1}^{\infty} \frac{ix^{2n}}{1+i x^{2n}z}
+
\sum_{n=1}^{\infty} \frac{-ix^{2n-2} (-z^{-2}) }{1-i x^{2n-2}z^{-1}}
\right\}
\right|_{\zeta=0} \\
=&
2\pi i
\left\{
\frac12+\frac{i}{1-i}
+\sum_{n=1}^{\infty}
 \frac{ix^{2n}}{1+ix^{2n}}
+
\sum_{n=1}^{\infty} \frac{ix^{2n}}{1-ix^{2n}}
\right\}  \\
=&
2\pi i
\left\{
\frac{i}{2}+2i \sum_{n=1}^{\infty} \frac{x^{2n}}{1+x^{4n}}
\right\}  \\
=&
-\pi 
\left\{
1+4\sum_{n=1}^{\infty} x^{2n} \sum_{m=0}^{\infty} (-1)^m x^{4nm} 
\right\}  \\
=&
-\pi 
\left\{
1+4\sum_{n=1}^{\infty} \sum_{m=0}^{\infty}  (-1)^m x^{2n(2m+1)} 
\right\}  \\
=&
-\pi 
\left\{
1+4\sum_{N=1}^{\infty} x^{2N} \left(\sum_{d | N, d:odd} (-1)^{\frac{d-1}{2}}   \right) 
\right\}  \\
=&
-\pi \sum_{n=0}^{\infty} S_2(n) x^{2n}  \\
=&
-\pi 
\theta^2
\left[
\begin{array}{c}
0 \\
0
\end{array}
\right](0,2\tau),
\end{align*}
where $x=\exp(\pi i \tau)$ and $z=\exp(2\pi i \zeta).$ 
\end{proof}

\subsection{Proof of equation (\ref{eqn:pro-series})}

\begin{proof}
Theorem \ref{thm:general-Jacobi-1/2} and Jacobi's triple product identity (\ref{eqn:Jacobi-triple}) imply that 
\begin{equation}
\label{eqn:pro-series-(1)}
\prod_{n=1}^{\infty}
\frac{(1-q^{2n})^9}{(1-q^n)^3(1-q^{4n})^3}
=
\sum_{n=0}^{\infty}
\left(\frac{-2}{n}\right)
n
q^
{
t_{(n-1)/2}
},
\end{equation}
where for $n\in\mathbb{Z},$ $t_n=n(n+1)/2.$
\par 
Replacing $q$ by $q^8$ and multiplying both sides of equation (\ref{eqn:pro-series-(1)}) by $q,$ 
we obtain 
\begin{equation*}
q
\prod_{n=1}^{\infty}
\frac{(1-q^{16n})^9}{(1-q^{8n})^3(1-q^{32n})^3}
=
\sum_{n=0}^{\infty}
\left(\frac{-2}{n}\right)
n
q^
{
n^2
}.
\end{equation*}
Setting $q=\exp(2\pi i \tau),$ 
we obtain 
\begin{equation*}
\frac
{\eta^9(16\tau)}
{\eta^3(8\tau)\eta^3(32\tau)}
=
\sum_{n=0}^{\infty}
\left(\frac{-2}{n}\right)
n
\exp(
2\pi i 
n^2
\tau
). 
\end{equation*}
Replacing $\tau$ by $\tau/8,$ we can prove equation (\ref{eqn:pro-series}). 
\end{proof}

\section{Proof of Theorem \ref{thm:general-Jacobi-1/4,3/4} }

\subsection{Applications of $S_2(n)$ and $S_{1,2}(n)$}

\begin{proposition}
\label{prop:1/4-pm-3/4}
{\it
For every $\tau\in\mathbb{H}^2,$ we have
\begin{equation}
\label{eqn-log-diff-difference-(1)}
\frac
{
\theta^{\prime}
\left[
\begin{array}{c}
1 \\
\frac14
\end{array}
\right](0,\tau)
}
{
\theta
\left[
\begin{array}{c}
1 \\
\frac14
\end{array}
\right](0,\tau)
}
%%%%%%%%%%%%%%%%%%%%%%
%%%%%%%%%%%%%%%%%%%%%
-
%%%%%%%%%%%%%%%%%%%%%%%%
%%%%%%%%%%%%%%%%%%%%%%%%%%
\frac
{
\theta^{\prime}
\left[
\begin{array}{c}
1 \\
\frac34
\end{array}
\right](0,\tau)
}
{
\theta
\left[
\begin{array}{c}
1 \\
\frac34
\end{array}
\right](0,\tau)
}
%%%%%%%%%%%%%%%%%%%%%%%%%
%%%%%%%%%%%%%%%%%%%%%%%%%%
=
%%%%%%%%%%%%%%%%%%%%%%%%%%
%%%%%%%%%%%%%%%%%%%%%%%%%
2\pi
\theta^2
\left[
\begin{array}{c}
0 \\
0
\end{array}
\right](0,4\tau),
\end{equation}
and
\begin{equation}
\label{eqn-log-diff-difference-(2)}
\frac
{
\theta^{\prime}
\left[
\begin{array}{c}
1 \\
\frac14
\end{array}
\right](0,\tau)
}
{
\theta
\left[
\begin{array}{c}
1 \\
\frac14
\end{array}
\right](0,\tau)
}
%%%%%%%%%%%%%%%%%%%%%%
%%%%%%%%%%%%%%%%%%%%%
+
%%%%%%%%%%%%%%%%%%%%%%%%
%%%%%%%%%%%%%%%%%%%%%%%%%%
\frac
{
\theta^{\prime}
\left[
\begin{array}{c}
1 \\
\frac34
\end{array}
\right](0,\tau)
}
{
\theta
\left[
\begin{array}{c}
1 \\
\frac34
\end{array}
\right](0,\tau)
}
%%%%%%%%%%%%%%%%%%%%%%%%%
%%%%%%%%%%%%%%%%%%%%%%%%%%
=
%%%%%%%%%%%%%%%%%%%%%%%%%%
%%%%%%%%%%%%%%%%%%%%%%%%%
-2\sqrt{2}\pi
\theta
\left[
\begin{array}{c}
0 \\
0
\end{array}
\right](0,2\tau)
%%%%%%%%%%%%%%%%%%
\theta
\left[
\begin{array}{c}
0 \\
0
\end{array}
\right](0,4\tau).
\end{equation}
}
\end{proposition}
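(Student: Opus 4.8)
The plan is to follow the proof of equation (\ref{eqn:general-Jacobi-1/2-(1)}) in Section~3: compute each logarithmic derivative from the Jacobi triple product identity (\ref{eqn:Jacobi-triple}), and then form the difference and the sum for the characteristics $\epsilon^{\prime}=\frac14$ and $\epsilon^{\prime}=\frac34$. Write $L(\epsilon^{\prime})$ for the logarithmic derivative $\theta^{\prime}\left[\begin{smallmatrix}1\\\epsilon^{\prime}\end{smallmatrix}\right](0,\tau)\big/\theta\left[\begin{smallmatrix}1\\\epsilon^{\prime}\end{smallmatrix}\right](0,\tau)$, so that (\ref{eqn-log-diff-difference-(1)}) and (\ref{eqn-log-diff-difference-(2)}) read $L(\tfrac14)-L(\tfrac34)=2\pi\,\theta^2\left[\begin{smallmatrix}0\\0\end{smallmatrix}\right](0,4\tau)$ and $L(\tfrac14)+L(\tfrac34)=-2\sqrt2\,\pi\,\theta\left[\begin{smallmatrix}0\\0\end{smallmatrix}\right](0,2\tau)\theta\left[\begin{smallmatrix}0\\0\end{smallmatrix}\right](0,4\tau)$.

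First I would record a closed form for $L(\epsilon^{\prime})$ valid for general $\epsilon^{\prime}$. Taking $\epsilon=1$ in (\ref{eqn:Jacobi-triple}), differentiating $\log\theta$ with respect to $\zeta$, multiplying by $dz/d\zeta=2\pi i z$, and setting $\zeta=0$ (so $z=1$) yields, after pulling the $n=1$ term out of the second product and reindexing,
\[
L(\epsilon^{\prime})=2\pi i\left\{\frac{1-e^{-\pi i\epsilon^{\prime}}}{2\left(1+e^{-\pi i\epsilon^{\prime}}\right)}+\sum_{n=1}^{\infty}\left(\frac{e^{\pi i\epsilon^{\prime}}x^{2n}}{1+e^{\pi i\epsilon^{\prime}}x^{2n}}-\frac{e^{-\pi i\epsilon^{\prime}}x^{2n}}{1+e^{-\pi i\epsilon^{\prime}}x^{2n}}\right)\right\},\qquad x=\exp(\pi i\tau).
\]
The only delicate bookkeeping here is the $n=1$ boundary term of the factor $1+e^{-\pi i\epsilon^{\prime}}x^{2n-2}/z$, exactly as in the $\epsilon^{\prime}=\frac12$ computation. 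Setting $a=\exp(\pi i/4)$ and using $\exp(3\pi i/4)=-\bar a$, $\exp(-3\pi i/4)=-a$, together with $a^2=i$ and $\bar a^2=-i$, expresses both $L(\tfrac14)$ and $L(\tfrac34)$ through $a$ and $\bar a$.

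For the difference (\ref{eqn-log-diff-difference-(1)}), the summand of $L(\tfrac14)-L(\tfrac34)$ collapses by $\frac{1}{1+u}-\frac{1}{1-u}=\frac{-2u}{1-u^2}$ to $-4i\,x^{4n}/(1+x^{8n})$, while the constant term evaluates to $-i$. Expanding $x^{4n}/(1+x^{8n})$ geometrically and applying the divisor formula (\ref{eqn:2-squares}) for $S_2$ — the exponents $4N$ here playing the role of the $2N$ in Section~3 — gives $L(\tfrac14)-L(\tfrac34)=2\pi i\bigl(-i-i\sum_{N\ge1}S_2(N)x^{4N}\bigr)=2\pi\bigl(1+\sum_{N\ge1}S_2(N)x^{4N}\bigr)=2\pi\,\theta^2\left[\begin{smallmatrix}0\\0\end{smallmatrix}\right](0,4\tau)$, which is (\ref{eqn-log-diff-difference-(1)}).

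For the sum (\ref{eqn-log-diff-difference-(2)}), the summand of $L(\tfrac14)+L(\tfrac34)$ collapses by $\frac{1}{1+u}+\frac{1}{1-u}=\frac{2}{1-u^2}$ to $2\sqrt2\,i\,x^{2n}(1+x^{4n})/(1+x^{8n})$, with constant term $\sqrt2\,i$. The main obstacle is identifying the remaining Lambert series $\sum_{n\ge1}x^{2n}(1+x^{4n})/(1+x^{8n})$ with $S_{1,2}$. The key step is the rewriting $\frac{1}{1+w^{4n}}=\frac{1-w^{4n}}{1-w^{8n}}$ with $w=x^2$, which turns the summand into $(w^{n}+w^{3n}-w^{5n}-w^{7n})/(1-w^{8n})$; expanding geometrically sorts the contributing divisors into the residue classes $1,3,5,7\pmod 8$ with precisely the signs of $\left(\frac{-2}{d}\right)$, so by (\ref{eqn:1,2-squares}) that series equals $\tfrac12\sum_{n\ge1}S_{1,2}(n)x^{2n}$. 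Hence $L(\tfrac14)+L(\tfrac34)=2\pi i\bigl(\sqrt2\,i+\sqrt2\,i\sum_{n\ge1}S_{1,2}(n)x^{2n}\bigr)=-2\sqrt2\,\pi\bigl(1+\sum_{n\ge1}S_{1,2}(n)x^{2n}\bigr)=-2\sqrt2\,\pi\,\theta\left[\begin{smallmatrix}0\\0\end{smallmatrix}\right](0,2\tau)\theta\left[\begin{smallmatrix}0\\0\end{smallmatrix}\right](0,4\tau)$, establishing (\ref{eqn-log-diff-difference-(2)}).
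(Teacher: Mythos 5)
Your proof is correct and is essentially the paper's own argument: both compute the logarithmic derivatives of $\theta\left[\begin{array}{c}1\\ \frac14\end{array}\right]$ and $\theta\left[\begin{array}{c}1\\ \frac34\end{array}\right]$ from the Jacobi triple product identity (\ref{eqn:Jacobi-triple}), reduce the difference and the sum to divisor sums sorted by residue classes mod $8$, and identify the resulting series with the $S_2$ and $S_{1,2}$ generating functions of Section \ref{intro} evaluated at $4\tau$ and $2\tau$ via (\ref{eqn:2-squares}) and (\ref{eqn:1,2-squares}). The only difference is the order of bookkeeping: the paper first expands each logarithmic derivative into divisor sums weighted by $\sin\frac{\pi d}{4}$ and $\sin\frac{3\pi d}{4}$ and then combines them with sum-to-product trigonometric identities, whereas you combine the two Lambert series by rational-function algebra first and expand geometrically once at the end --- the same computation, reorganized.
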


\begin{proof}
Jacobi's triple product identity (\ref{eqn:Jacobi-triple}) yields 
\begin{align*}
\frac{
\theta^{\prime}
\left[
\begin{array}{c}
1 \\
\frac14
\end{array}
\right](0,\tau)  
}
{
\theta
\left[
\begin{array}{c}
1 \\
\frac14
\end{array}
\right](0,\tau)  
}
=&
2\pi i 
\left\{
\frac{i}{\sqrt{2}}-\frac{i}{2}+
2i
\sum_{N=1}^{\infty} x^{2N}\left(\sum_{d|N} (-1)^{d-1} \sin \frac{\pi d}{4}    \right)
\right\},  \\
%%%%%%%%%%%%%%%%%%%%%%%%
%%%%%%%%%%%%%%%%%%%%%%%%
\frac{
\theta^{\prime}
\left[
\begin{array}{c}
1 \\
\frac34
\end{array}
\right](0,\tau)  
}
{
\theta
\left[
\begin{array}{c}
1 \\
\frac34
\end{array}
\right](0,\tau)  
}
=&
2\pi i 
\left\{
\frac{i}{\sqrt{2}}+\frac{i}{2}+
2i
\sum_{N=1}^{\infty} x^{2N}\left(\sum_{d|N} (-1)^{d-1} \sin \frac{3\pi d}{4}    \right)
\right\},  
\end{align*}
where $x=\exp(\pi i \tau).$ 
\par
We first treat equation (\ref{eqn-log-diff-difference-(1)}). 
For this purpose, we have  
\begin{align*}
\frac{
\theta^{\prime}
\left[
\begin{array}{c}
1 \\
\frac14
\end{array}
\right](0,\tau)  
}
{
\theta
\left[
\begin{array}{c}
1 \\
\frac14
\end{array}
\right](0,\tau)  
}
-
\frac{
\theta^{\prime}
\left[
\begin{array}{c}
1 \\
\frac34
\end{array}
\right](0,\tau)  
}
{
\theta
\left[
\begin{array}{c}
1 \\
\frac34
\end{array}
\right](0,\tau)  
}
=&
2\pi i
\left\{
-i+2i 
\sum_{N=1}^{\infty}x^{2N}
\left(
\sum_{d|N} (-1)^{d-1}
\left[
\sin \frac{\pi d}{4}-\sin \frac{3\pi d}{4}
\right]
\right)
\right\}  \\
%%%%%%%%%%%%%%%%%%%%%%%
%%%%%%%%%%%%%%%%%%%%%%%%
=&
2\pi i
\left\{
-i-4i 
\sum_{N=1}^{\infty} x^{2N}
\left(
\sum_{d|N} (-1)^{d-1} \cos\frac{\pi d}{2} \sin\frac{\pi d}{4}
\right)
\right\}  \\
=&
2\pi i
\left\{
-i-4i 
\sum_{N=1}^{\infty} x^{2N}
\left(
\sum_{d|N, d\equiv 2 \,\mathrm{mod}\, 8} 1
-
\sum_{d|N, d\equiv 6 \,\mathrm{mod}\, 8} 1
\right)
\right\}  \\
=&
2\pi 
\left\{
1+4  \sum_{N=1}^{\infty} x^{4N} \left(  \sum_{d|N, \, d:odd} (-1)^{\frac{d-1}{2}} \right)
\right\}  \\
=&
2\pi 
\theta^2
\left[
\begin{array}{c}
0 \\
0
\end{array}
\right](0,4\tau). 
\end{align*}
\par
We next deal with (\ref{eqn-log-diff-difference-(2)}). 
For this purpose, we have  
\begin{align*}
\frac{
\theta^{\prime}
\left[
\begin{array}{c}
1 \\
\frac14
\end{array}
\right](0,\tau)  
}
{
\theta
\left[
\begin{array}{c}
1 \\
\frac14
\end{array}
\right](0,\tau)  
}
+
\frac{
\theta^{\prime}
\left[
\begin{array}{c}
1 \\
\frac34
\end{array}
\right](0,\tau)  
}
{
\theta
\left[
\begin{array}{c}
1 \\
\frac34
\end{array}
\right](0,\tau)  
}
=&
2\pi i
\left\{
\sqrt{2}i
+
2i 
\sum_{N=1}^{\infty}x^{2N}
\left(
\sum_{d|N} (-1)^{d-1}
\left[
\sin \frac{\pi d}{4}+\sin \frac{3\pi d}{4}
\right]
\right)
\right\}  \\
%%%%%%%%%%%%%%%%%%%%%%%
%%%%%%%%%%%%%%%%%%%%%%%%
=&
2\pi i
\left\{
\sqrt{2}i
+2i 
\sum_{N=1}^{\infty} x^{2N}
\left(
\sum_{d|N} (-1)^{d-1} 2\sin\frac{\pi d}{2} \cos\frac{\pi d}{4}
\right)
\right\}  \\
=&
2\pi i
\left\{
\sqrt{2}i
+2i 
\sum_{N=1}^{\infty} x^{2N}
\sqrt{2}
\left(
d_{1,8}(N)+d_{3,8}(N)-d_{5,8}(N)-d_{7,8}(N)
\right)
\right\}  \\
=&-
2\sqrt{2}\pi 
\left\{
1+2 \sum_{N=1}^{\infty} S_{1,2}(N)x^{2N} 
\right\}  \\
=&
-
2\sqrt{2}\pi 
\theta
\left[
\begin{array}{c}
0 \\
0
\end{array}
\right](0,2\tau)
\theta
\left[
\begin{array}{c}
0 \\
0
\end{array}
\right](0,4\tau). 
\end{align*}
\end{proof}

\subsection{Proof of Theorem \ref{thm:general-Jacobi-1/4,3/4} }

\begin{proof}
The theorem follows from Proposition \ref{prop:1/4-pm-3/4}. 
\end{proof}

\subsection{Another expressions of the derivative formulas}

\begin{proposition}
\label{prop:theta-functional-formula}
{\it
For every $(\zeta,\tau)\in\mathbb{C}\times\mathbb{H}^2,$ 
we have
\begin{align}
&
\frac
{
\theta^4
\left[
\begin{array}{c}
\frac14 \\
\frac14
\end{array}
\right](\zeta,\tau) 
-
\zeta_8^3
\theta^4
\left[
\begin{array}{c}
\frac14 \\
\frac34
\end{array}
\right](\zeta,\tau) 
+
\zeta_8^6
\theta^4
\left[
\begin{array}{c}
\frac14 \\
\frac54
\end{array}
\right](\zeta,\tau) 
-
\zeta_8
\theta^4
\left[
\begin{array}{c}
\frac14 \\
\frac74
\end{array}
\right](\zeta,\tau) 
}
{
\theta
\left[
\begin{array}{c}
\frac14 \\
0
\end{array}
\right](\zeta,\tau)
 \theta
\left[
\begin{array}{c}
\frac14 \\
\frac12
\end{array}
\right](\zeta,\tau) 
\theta
\left[
\begin{array}{c}
\frac14 \\
1
\end{array}
\right](\zeta,\tau) 
\theta
\left[
\begin{array}{c}
\frac14 \\
\frac32
\end{array}
\right](\zeta,\tau) 
}    \notag  \\
=&
-
\frac
{
8\zeta_8^3
\left(
\theta^3
\left[
\begin{array}{c}
1 \\
\frac14
\end{array}
\right](0,\tau) 
\theta^{\prime}
\left[
\begin{array}{c}
1 \\
\frac14
\end{array}
\right](0,\tau)
-
\theta^3
\left[
\begin{array}{c}
1 \\
\frac34
\end{array}
\right](0,\tau) 
\theta^{\prime}
\left[
\begin{array}{c}
1 \\
\frac34
\end{array}
\right](0,\tau)
\right)
}
{
\theta^{\prime}
\left[
\begin{array}{c}
1 \\
1
\end{array}
\right](0,\tau)
\theta
\left[
\begin{array}{c}
1 \\
0
\end{array}
\right](0,\tau)
\theta^2
\left[
\begin{array}{c}
1 \\
\frac12
\end{array}
\right](0,\tau)
},  \label{eqn-1/4-half}
\end{align}
%%%%%%%%%%%%%%%%%%%%%%%%%%%%%%%%%%%%%%%%%%%%%%%%%%%%%%
%%%%%%%%%%%%%%%%%%%%%%%%%%%%%%%%%%%%%%%%%%%%%%%%%%%%%%%
%%%%%%%%%%%%%%%%%%%%%%%%%%%%%%%%%%%%%%%%%%%%%%%%%%%%%%%%%%%
and
\begin{align}
&
\frac
{
\theta^4
\left[
\begin{array}{c}
\frac34 \\
\frac14
\end{array}
\right](\zeta,\tau) 
-
\zeta_8
\theta^4
\left[
\begin{array}{c}
\frac34 \\
\frac34
\end{array}
\right](\zeta,\tau) 
+
\zeta_8^2
\theta^4
\left[
\begin{array}{c}
\frac34 \\
\frac54
\end{array}
\right](\zeta,\tau) 
-
\zeta_8^3
\theta^4
\left[
\begin{array}{c}
\frac34 \\
\frac74
\end{array}
\right](\zeta,\tau) 
}
{
\theta
\left[
\begin{array}{c}
\frac34 \\
0
\end{array}
\right](\zeta,\tau)
 \theta
\left[
\begin{array}{c}
\frac34 \\
\frac12
\end{array}
\right](\zeta,\tau) 
\theta
\left[
\begin{array}{c}
\frac34 \\
1
\end{array}
\right](\zeta,\tau) 
\theta
\left[
\begin{array}{c}
\frac34 \\
\frac32
\end{array}
\right](\zeta,\tau) 
}    \notag  \\
=&
-
\frac
{
8\zeta_8
\left(
\theta^3
\left[
\begin{array}{c}
1 \\
\frac14
\end{array}
\right](0,\tau) 
\theta^{\prime}
\left[
\begin{array}{c}
1 \\
\frac14
\end{array}
\right](0,\tau)
-
\theta^3
\left[
\begin{array}{c}
1 \\
\frac34
\end{array}
\right](0,\tau) 
\theta^{\prime}
\left[
\begin{array}{c}
1 \\
\frac34
\end{array}
\right](0,\tau)
\right)
}
{
\theta^{\prime}
\left[
\begin{array}{c}
1 \\
1
\end{array}
\right](0,\tau)
\theta
\left[
\begin{array}{c}
1 \\
0
\end{array}
\right](0,\tau)
\theta^2
\left[
\begin{array}{c}
1 \\
\frac12
\end{array}
\right](0,\tau)
},  \label{eqn-3/4-half}
\end{align}
where $\zeta_8=\exp(2\pi i/8).$ 
}
\end{proposition}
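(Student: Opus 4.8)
The plan is to treat the numerator $F(\zeta)$ and the denominator $G(\zeta)$ of the left-hand side of (\ref{eqn-1/4-half}) as $N$-th order $\theta$-functions of $\zeta$ and to exploit the dimension theory of Section \ref{sec:properties}. Writing $G(\zeta)=\theta\left[\begin{smallmatrix}1/4\\0\end{smallmatrix}\right]\theta\left[\begin{smallmatrix}1/4\\1/2\end{smallmatrix}\right]\theta\left[\begin{smallmatrix}1/4\\1\end{smallmatrix}\right]\theta\left[\begin{smallmatrix}1/4\\3/2\end{smallmatrix}\right]$ and applying the quasi-periodicity (\ref{eqn:integer-char}) factor by factor gives $G(\zeta+1)=-G(\zeta)$ and $G(\zeta+\tau)=\exp(-\pi i[3+8\zeta+4\tau])G(\zeta)$, since the four second characteristics sum to $3$; thus $G\in\mathcal{F}_4\left[\begin{smallmatrix}1\\3\end{smallmatrix}\right]$. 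Each summand of $F$ is $\theta^4\left[\begin{smallmatrix}1/4\\\epsilon'\end{smallmatrix}\right]$ with $\epsilon'\in\{1/4,3/4,5/4,7/4\}$, and (\ref{eqn:integer-char}) gives $\theta^4\left[\begin{smallmatrix}1/4\\\epsilon'\end{smallmatrix}\right]\in\mathcal{F}_4\left[\begin{smallmatrix}1\\4\epsilon'\end{smallmatrix}\right]$; because $4\epsilon'\in\{1,3,5,7\}$ are all odd and the space depends only on the characteristic modulo $2$, every summand, hence $F$, lies in the single space $\mathcal{F}_4\left[\begin{smallmatrix}1\\1\end{smallmatrix}\right]=\mathcal{F}_4\left[\begin{smallmatrix}1\\3\end{smallmatrix}\right]$. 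Therefore $F$ and $G$ carry the same automorphy factors, and the quotient $F/G$ is invariant under $\zeta\mapsto\zeta+1$ and $\zeta\mapsto\zeta+\tau$, i.e.\ it is an elliptic function of $\zeta$.

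The decisive step is to show that $F/G$ has no poles, so that it reduces to a $\tau$-dependent constant $c(\tau)$. A nonzero element of $\mathcal{F}_4\left[\begin{smallmatrix}1\\1\end{smallmatrix}\right]$ has exactly four zeros in a fundamental parallelogram; for $G$ these are the four simple zeros of its factors, located by the zero formula of Section \ref{sec:properties} at $\zeta=\tfrac38\tau+\tfrac{1-\epsilon'}{2}$ with $\epsilon'\in\{0,1/2,1,3/2\}$. I would evaluate $F$ at each of these points by using the translation law (\ref{eqn:real-char}) together with (\ref{eqn:integer-char}) to rewrite each $\theta^4\left[\begin{smallmatrix}1/4\\\epsilon'\end{smallmatrix}\right]$ there as an explicit eighth-root-of-unity phase times a fourth power of a theta constant, and then verify that the alternating sum weighted by the powers of $\zeta_8$ cancels identically. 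This cancellation — equivalently, that $F$ and $G$ share all four zeros — is the main obstacle, since it demands exact bookkeeping of the phases produced by the half- and quarter-period shifts; the Farkas--Kra lemma (Lemma \ref{lem:Farkas-Kra}) is the natural device for collapsing the resulting products of theta constants. Once the four vanishings are in hand, $F/G$ is a holomorphic elliptic function and hence equals a constant $c(\tau)$, so (\ref{eqn-1/4-half}) collapses to the assertion that $c(\tau)$ equals the stated right-hand side.

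It then remains to identify $c(\tau)$. Since the left-hand side is now independent of $\zeta$, setting $\zeta=0$ gives $c(\tau)=F(0)/G(0)$, and the proof reduces to the pure theta-constant identity $F(0)/G(0)=\text{(right-hand side)}$. This is guided by the observation that, up to the explicit factor $-8\zeta_8^3$, the numerator and denominator of the right-hand side are precisely the first $\zeta$-derivatives at $\zeta=0$ of $\theta^4\left[\begin{smallmatrix}1\\1/4\end{smallmatrix}\right]-\theta^4\left[\begin{smallmatrix}1\\3/4\end{smallmatrix}\right]$ and of $\theta\left[\begin{smallmatrix}1\\1\end{smallmatrix}\right]\theta\left[\begin{smallmatrix}1\\0\end{smallmatrix}\right]\theta^2\left[\begin{smallmatrix}1\\1/2\end{smallmatrix}\right]$, the latter being nonzero exactly because the odd function $\theta\left[\begin{smallmatrix}1\\1\end{smallmatrix}\right]$ has a simple zero at the origin and so contributes $\theta'\left[\begin{smallmatrix}1\\1\end{smallmatrix}\right]$. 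I would verify the required identity by reexpressing the characteristic-$1/4$ theta constants in $F(0)$ and $G(0)$ through the Farkas--Kra lemma, Jacobi's derivative formula (\ref{eqn:Jacobi}), and the derivative formulas of Theorem \ref{thm:general-Jacobi-1/4,3/4}, a lengthy but routine reduction. The companion identity (\ref{eqn-3/4-half}) is handled in exactly the same manner: the summands $\theta^4\left[\begin{smallmatrix}3/4\\\epsilon'\end{smallmatrix}\right]$ again lie in $\mathcal{F}_4\left[\begin{smallmatrix}1\\1\end{smallmatrix}\right]$ because $4\cdot\tfrac34=3\equiv1\pmod 2$, and the denominator again lies in $\mathcal{F}_4\left[\begin{smallmatrix}1\\3\end{smallmatrix}\right]$, so the same elliptic-function argument applies with only the $\zeta_8$-phase bookkeeping altered.
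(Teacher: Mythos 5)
Your first half coincides with the paper's own argument: the paper also forms $f=F/G$, notes that numerator and denominator have the same quasi-periodicity and the same zeros, and concludes that $f$ is a pole-free elliptic function of $\zeta$, hence a constant depending only on $\tau$. Your verification of this via the spaces $\mathcal{F}_4\left[\begin{smallmatrix}1\\1\end{smallmatrix}\right]=\mathcal{F}_4\left[\begin{smallmatrix}1\\3\end{smallmatrix}\right]$ and the location of the four zeros of $G$ is correct, and is in fact more explicit than the paper's ``easily verified.''

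The gap is in how you identify the constant. You evaluate at $\zeta=0$ and reduce the proposition to the theta-constant identity $F(0)/G(0)=\mathrm{(RHS)}$, which you then declare to be a ``lengthy but routine reduction'' via Lemma \ref{lem:Farkas-Kra}, Jacobi's formula, and Theorem \ref{thm:general-Jacobi-1/4,3/4}. That identity is not routine: after substituting Jacobi's formula for $\theta^{\prime}\left[\begin{smallmatrix}1\\1\end{smallmatrix}\right]$, it is precisely Corollary \ref{coro:1/4-3/4}, which the paper obtains as a \emph{consequence} of this proposition, and whose independent verification is essentially the content of Proposition \ref{prop:Jacobi-1/4-(1)} — the Farkas--Kra evaluations of $F(0)$, $G(0)$, and of $\theta^4\left[\begin{smallmatrix}1\\1/4\end{smallmatrix}\right]\pm\theta^4\left[\begin{smallmatrix}1\\3/4\end{smallmatrix}\right]$ at level $4\tau$ that occupy the remainder of Section 4. (Your appeal to Theorem \ref{thm:general-Jacobi-1/4,3/4} is at least not circular, since that theorem rests only on Proposition \ref{prop:1/4-pm-3/4}; but the computation you defer is the actual substance of the statement, so as written the proof is missing its decisive step.) The idea you are missing is the choice of evaluation point. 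Since $f$ is constant, $F^{\prime}=f\,G^{\prime}$, so $f=F^{\prime}(\zeta_0)/G^{\prime}(\zeta_0)$ at any point where $G^{\prime}\neq 0$; take $\zeta_0=\tfrac{3\tau}{8}$, the simple common zero coming from the factor $\theta\left[\begin{smallmatrix}1/4\\1\end{smallmatrix}\right]$. The translation law (\ref{eqn:real-char}) with $m=\tfrac34$ converts every characteristic $\left[\begin{smallmatrix}1/4\\ \epsilon^{\prime}\end{smallmatrix}\right]$ into $\left[\begin{smallmatrix}1\\ \epsilon^{\prime}\end{smallmatrix}\right]$ up to explicit phases, so $G^{\prime}\left(\tfrac{3\tau}{8}\right)$ collapses (only the term where the derivative hits the vanishing factor survives) to a phase times $\theta^{\prime}\left[\begin{smallmatrix}1\\1\end{smallmatrix}\right]\theta\left[\begin{smallmatrix}1\\0\end{smallmatrix}\right]\theta^2\left[\begin{smallmatrix}1\\1/2\end{smallmatrix}\right]$, while in $F^{\prime}\left(\tfrac{3\tau}{8}\right)$ the terms proportional to $F\left(\tfrac{3\tau}{8}\right)=0$ drop out, leaving a phase times $\theta^3\left[\begin{smallmatrix}1\\1/4\end{smallmatrix}\right]\theta^{\prime}\left[\begin{smallmatrix}1\\1/4\end{smallmatrix}\right]-\theta^3\left[\begin{smallmatrix}1\\3/4\end{smallmatrix}\right]\theta^{\prime}\left[\begin{smallmatrix}1\\3/4\end{smallmatrix}\right]$: the right-hand side of (\ref{eqn-1/4-half}) appears directly, with no further identities required. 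This is exactly the paper's proof (``differentiate both the numerator and denominator and set $\zeta=\tfrac{3\tau}{8}$''), and your own observation that the right-hand side is a ratio of $\zeta$-derivatives of $\theta^4\left[\begin{smallmatrix}1\\1/4\end{smallmatrix}\right]-\theta^4\left[\begin{smallmatrix}1\\3/4\end{smallmatrix}\right]$ and of $\theta\left[\begin{smallmatrix}1\\1\end{smallmatrix}\right]\theta\left[\begin{smallmatrix}1\\0\end{smallmatrix}\right]\theta^2\left[\begin{smallmatrix}1\\1/2\end{smallmatrix}\right]$ was pointing there: those functions are, up to phases, the translates of $F$ and $G$ by $\tfrac{3\tau}{8}$.
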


\begin{proof}
We treat equation (\ref{eqn-1/4-half}). 
Equation (\ref{eqn-3/4-half}) can be proved in the same way. 
For this purpose, 
we set 
$$
f(\zeta)=
\frac
{
\theta^4
\left[
\begin{array}{c}
\frac14 \\
\frac14
\end{array}
\right](\zeta,\tau) 
-
\zeta_8^3
\theta^4
\left[
\begin{array}{c}
\frac14 \\
\frac34
\end{array}
\right](\zeta,\tau) 
+
\zeta_8^6
\theta^4
\left[
\begin{array}{c}
\frac14 \\
\frac54
\end{array}
\right](\zeta,\tau) 
-
\zeta_8
\theta^4
\left[
\begin{array}{c}
\frac14 \\
\frac74
\end{array}
\right](\zeta,\tau) 
}
{
\theta
\left[
\begin{array}{c}
\frac14 \\
0
\end{array}
\right](\zeta,\tau)
 \theta
\left[
\begin{array}{c}
\frac14 \\
\frac12
\end{array}
\right](\zeta,\tau) 
\theta
\left[
\begin{array}{c}
\frac14 \\
1
\end{array}
\right](\zeta,\tau) 
\theta
\left[
\begin{array}{c}
\frac14 \\
\frac32
\end{array}
\right](\zeta,\tau) 
}.   
$$
It can be easily verified that 
$f(\zeta)$ is an elliptic function with no poles, 
because both the numerator and denominator of $f(\zeta)$ have the same zeros. 
Thus, $f(\zeta)$ is constant. 
\par
If we differentiate both the numerator and denominator of $f(\zeta)$ and set 
$\zeta=\frac{3\tau}{8},$ 
we obtain equation (\ref{eqn-1/4-half}).
\end{proof}

Setting $\zeta=0$ in Proposition \ref{prop:theta-functional-formula} 
and 
using Jacobi's derivative's formula (\ref{eqn:Jacobi}),  
we obtain the following corollary:

\begin{corollary}
\label{coro:1/4-3/4}
{\it
For every $\tau\in\mathbb{H}^2,$ 
we have
\begin{align*}
&\theta^3
\left[
\begin{array}{c}
1 \\
\frac14
\end{array}
\right]
\theta^{\prime}
\left[
\begin{array}{c}
1 \\
\frac14
\end{array}
\right]
-
\theta^3
\left[
\begin{array}{c}
1 \\
\frac34
\end{array}
\right] 
\theta^{\prime}
\left[
\begin{array}{c}
1 \\
\frac34
\end{array}
\right]  \\
%%%%%%%%%%%%%%%%%%%%%%%%%%%%%%%%
%%%%%%%%%%%%%%%%%%%%%%%%%%%%%%%%
%%%%%%%%%%%%%%%%%%%%%%%%%%%%%%%%%
=&
\frac{\pi}{8\zeta_8^3}
%%%%%%%%%%%%%%%%%%%
\theta
\left[
\begin{array}{c}
0 \\
0
\end{array}
\right] 
\theta
\left[
\begin{array}{c}
0 \\
1
\end{array}
\right] 
\theta^2
\left[
\begin{array}{c}
1\\
0
\end{array}
\right] 
\theta^2
\left[
\begin{array}{c}
1\\
\frac12
\end{array}
\right] 
%%%%%%%%%%%%%%%%%%
%%%%%%%%%%%%%%%%%%%%
\frac
{
\theta^4
\left[
\begin{array}{c}
\frac14 \\
\frac14
\end{array}
\right]
-
\zeta_8^3
\theta^4
\left[
\begin{array}{c}
\frac14 \\
\frac34
\end{array}
\right]
+
\zeta_8^6
\theta^4
\left[
\begin{array}{c}
\frac14 \\
\frac54
\end{array}
\right]
-
\zeta_8
\theta^4
\left[
\begin{array}{c}
\frac14 \\
\frac74
\end{array}
\right]
}
{
\theta
\left[
\begin{array}{c}
\frac14 \\
0
\end{array}
\right]
 \theta
\left[
\begin{array}{c}
\frac14 \\
\frac12
\end{array}
\right]
\theta
\left[
\begin{array}{c}
\frac14 \\
1
\end{array}
\right]
\theta
\left[
\begin{array}{c}
\frac14 \\
\frac32
\end{array}
\right]
}   \\
%%%%%%%%%%%%%%%%%%%%%%%%%%%%%%%%
%%%%%%%%%%%%%%%%%%%%%%%%%%%%%%%%
%%%%%%%%%%%%%%%%%%%%%%%%%%%%%%%%%
=&
\frac{\pi}{8\zeta_8}
%%%%%%%%%%%%%%%%%%%
\theta
\left[
\begin{array}{c}
0 \\
0
\end{array}
\right] 
\theta
\left[
\begin{array}{c}
0 \\
1
\end{array}
\right] 
\theta^2
\left[
\begin{array}{c}
1\\
0
\end{array}
\right] 
\theta^2
\left[
\begin{array}{c}
1\\
\frac12
\end{array}
\right] 
%%%%%%%%%%%%%%%%%%
%%%%%%%%%%%%%%%%%%%%
\frac
{
\theta^4
\left[
\begin{array}{c}
\frac34 \\
\frac14
\end{array}
\right]
-
\zeta_8
\theta^4
\left[
\begin{array}{c}
\frac34 \\
\frac34
\end{array}
\right]
+
\zeta_8^2
\theta^4
\left[
\begin{array}{c}
\frac34 \\
\frac54
\end{array}
\right]
-
\zeta_8^3
\theta^4
\left[
\begin{array}{c}
\frac34 \\
\frac74
\end{array}
\right]
}
{
\theta
\left[
\begin{array}{c}
\frac34 \\
0
\end{array}
\right]
 \theta
\left[
\begin{array}{c}
\frac34 \\
\frac12
\end{array}
\right]
\theta
\left[
\begin{array}{c}
\frac34 \\
1
\end{array}
\right]
\theta
\left[
\begin{array}{c}
\frac34 \\
\frac32
\end{array}
\right]
}, 
\end{align*}
where $\zeta_8=\exp(2\pi i/8).$ 
}
\end{corollary}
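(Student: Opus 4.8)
The plan is to obtain the corollary as an immediate specialization of Proposition \ref{prop:theta-functional-formula}. Both identities (\ref{eqn-1/4-half}) and (\ref{eqn-3/4-half}) hold for all $(\zeta,\tau)\in\C\times\mathbb{H}^2$, so I would simply set $\zeta=0$. Under this specialization the left-hand side of each identity becomes exactly the quotient of theta constants displayed on the right-hand side of the corollary, while the right-hand side becomes a constant multiple of the difference $\theta^3\left[\begin{array}{c}1\\\frac14\end{array}\right]\theta^{\prime}\left[\begin{array}{c}1\\\frac14\end{array}\right]-\theta^3\left[\begin{array}{c}1\\\frac34\end{array}\right]\theta^{\prime}\left[\begin{array}{c}1\\\frac34\end{array}\right]$, divided by the product $\theta^{\prime}\left[\begin{array}{c}1\\1\end{array}\right](0,\tau)\,\theta\left[\begin{array}{c}1\\0\end{array}\right](0,\tau)\,\theta^2\left[\begin{array}{c}1\\\frac12\end{array}\right](0,\tau)$.

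The one remaining factor to dispose of is $\theta^{\prime}\left[\begin{array}{c}1\\1\end{array}\right](0,\tau)$, and this is where Jacobi's derivative formula enters. I would solve the $\zeta=0$ specialization of (\ref{eqn-1/4-half}) for the difference above, which places $\theta^{\prime}\left[\begin{array}{c}1\\1\end{array}\right]$ in the numerator, and then replace it by means of (\ref{eqn:Jacobi}), namely $\theta^{\prime}\left[\begin{array}{c}1\\1\end{array}\right]=-\pi\,\theta\left[\begin{array}{c}0\\0\end{array}\right]\theta\left[\begin{array}{c}1\\0\end{array}\right]\theta\left[\begin{array}{c}0\\1\end{array}\right]$. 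Combining the newly produced $\theta\left[\begin{array}{c}1\\0\end{array}\right]$ with the one already present collapses the product into $\theta^2\left[\begin{array}{c}1\\0\end{array}\right]$, and the factor $-\pi$ multiplied against the $-1/(8\zeta_8^3)$ prefactor reproduces the stated coefficient $\pi/(8\zeta_8^3)$. This yields the first displayed equality of the corollary; running the identical computation on (\ref{eqn-3/4-half}), whose prefactor carries $\zeta_8$ in place of $\zeta_8^3$, produces the second.

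The argument is purely a rearrangement of identities already established, so there is no conceptual obstacle. The only point demanding genuine care is the bookkeeping of the eighth-root-of-unity powers $\zeta_8=\exp(2\pi i/8)$ together with the sign propagated from Jacobi's formula; checking that these combine precisely into the single coefficient $\pi/(8\zeta_8^3)$ (respectively $\pi/(8\zeta_8)$) is the main thing one must verify rather than assume.
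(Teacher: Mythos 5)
Your proposal is correct and coincides with the paper's own proof: the paper derives the corollary precisely by setting $\zeta=0$ in Proposition \ref{prop:theta-functional-formula} and eliminating $\theta^{\prime}\!\left[\begin{array}{c}1\\1\end{array}\right]$ via Jacobi's derivative formula (\ref{eqn:Jacobi}), exactly as you describe. Your bookkeeping of the signs and the powers of $\zeta_8$ is also right, so nothing further is needed.
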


By Lemma \ref{lem:Farkas-Kra} and Corollary \ref{coro:1/4-3/4}, 
we obtain the following proposition:

\begin{proposition}
\label{prop:Jacobi-1/4-(1)}
{\it
For every $\tau\in\mathbb{H}^2,$ 
we have 
\begin{align*}
&\theta^4
\left[
\begin{array}{c}
1 \\
\frac14
\end{array}
\right](0,\tau)
\cdot
\frac{
\theta^{\prime}
\left[
\begin{array}{c}
1 \\
\frac14
\end{array}
\right](0,\tau)
}
{
\theta
\left[
\begin{array}{c}
1 \\
\frac14
\end{array}
\right](0,\tau)
}
-
\theta^4
\left[
\begin{array}{c}
1 \\
\frac34
\end{array}
\right](0,\tau)
\cdot
\frac{
\theta^{\prime}
\left[
\begin{array}{c}
1 \\
\frac34
\end{array}
\right](0,\tau)
}
{
\theta
\left[
\begin{array}{c}
1 \\
\frac34
\end{array}
\right](0,\tau)
}  \\
=&
-2\pi
\theta^2
\left[
\begin{array}{c}
0 \\
0
\end{array}
\right](0,4\tau)
\theta
\left[
\begin{array}{c}
1 \\
0
\end{array}
\right](0,4\tau)
\theta
\left[
\begin{array}{c}
0 \\
1
\end{array}
\right](0,4\tau)  
%%%%%%%%%%%%%%%%%%%%%%%%%%%%%%%
%%%%%%%%%%%%%%%%%%%%%%%%%%%%%%%
\left\{
\theta^2
\left[
\begin{array}{c}
0 \\
0
\end{array}
\right](0,4\tau)
+
3
\theta^2
\left[
\begin{array}{c}
1 \\
0
\end{array}
\right](0,4\tau)
\right\}. 
\end{align*}
}
\end{proposition}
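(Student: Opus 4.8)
The plan is to start from the observation that the left-hand side of the proposition coincides exactly with the left-hand side of Corollary \ref{coro:1/4-3/4}. Indeed $\theta^4[1;\tfrac14](0,\tau)\cdot\theta'[1;\tfrac14](0,\tau)/\theta[1;\tfrac14](0,\tau)=\theta^3[1;\tfrac14](0,\tau)\,\theta'[1;\tfrac14](0,\tau)$, and similarly for the $3/4$ term, so the quantity in question is precisely $\theta^3[1;\tfrac14]\theta'[1;\tfrac14]-\theta^3[1;\tfrac34]\theta'[1;\tfrac34]$. Corollary \ref{coro:1/4-3/4} already expresses this as
\[
\frac{\pi}{8\zeta_8^3}\,\theta[0;0]\,\theta[0;1]\,\theta^2[1;0]\,\theta^2[1;\tfrac12]\cdot\frac{N}{D},
\]
all constants evaluated at $(0,\tau)$, where $N=\theta^4[\tfrac14;\tfrac14]-\zeta_8^3\theta^4[\tfrac14;\tfrac34]+\zeta_8^6\theta^4[\tfrac14;\tfrac54]-\zeta_8\theta^4[\tfrac14;\tfrac74]$ and $D=\theta[\tfrac14;0]\theta[\tfrac14;\tfrac12]\theta[\tfrac14;1]\theta[\tfrac14;\tfrac32]$. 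The whole task is therefore to show that this $\tau$-level expression collapses to the claimed product of theta constants at $4\tau$, and the only tool needed is Lemma \ref{lem:Farkas-Kra}, applied twice, together with the characteristic identities of Section \ref{sec:properties}.

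First I would reduce the prefactor and the denominator. Pairing the factors of $\theta[0;0]\theta[0;1]$, $\theta^2[1;0]$ and $\theta^2[1;\tfrac12]$ and invoking Lemma \ref{lem:Farkas-Kra} sends them to $2\tau$-constants; here the vanishing of the odd constant $\theta[1;1](0,\cdot)=0$ and the shift rule $\theta[\epsilon+2m;\epsilon'+2n]=e^{\pi i\epsilon n}\theta[\epsilon;\epsilon']$ kill every unwanted cross-term, leaving clean monomials. The same pairing applied to $D$ (all of whose factors have first characteristic $1/4$) yields, up to a power of $\zeta_8$, products of $\theta[\tfrac14;\cdot]$-constants with $\theta^2[0;1]$ at level $2\tau$. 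A second application of the lemma carries everything down to $4\tau$.

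The genuinely delicate step is the numerator $N$. Writing each fourth power as $(\theta^2[\tfrac14;\epsilon'])^2$ and using Lemma \ref{lem:Farkas-Kra} gives $\theta^2[\tfrac14;\epsilon'](0,\tau)=\theta[\tfrac14;2\epsilon']\theta[0;0]+\theta[\tfrac54;2\epsilon']\theta[1;0]$ at level $2\tau$; squaring produces three monomials each, and the point is that when the four terms of $N$ are combined with their respective phases $1,-\zeta_8^3,\zeta_8^6,-\zeta_8$, the shift rule forces the mixed monomials to cancel in pairs, leaving only the pure squares. Reducing these once more to $4\tau$ and collecting, the surviving combination is proportional to $\theta[\tfrac14;1]\,\theta[0;0]\,\{\theta^2[0;0]+3\theta^2[1;0]\}$ at $4\tau$; the coefficient $3$ arises from the duplication $\theta^2[1;0](0,2\tau)=2\theta[0;0]\theta[1;0](0,4\tau)$. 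I expect this phase-and-shift bookkeeping in $N$ — in particular making the cross-terms cancel and pinning down the factor $3$ — to be the main obstacle.

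Finally I would assemble the three reduced pieces. The factor $\theta[\tfrac14;1](0,4\tau)$ cancels between the reduced $N$ and the reduced $D$, while the factor $\theta^2[0;0]-\theta^2[1;0]$ (at $4\tau$) cancels between the reduced prefactor and $D$, and one further factor $\theta[0;1]$ cancels; the accumulated constant $\frac{1}{8\zeta_8^3}\cdot\frac{16}{\zeta_8}=\frac{2}{\zeta_8^4}=-2$ then combines with $\pi$ to give $-2\pi$. What remains is exactly $-2\pi\,\theta^2[0;0](0,4\tau)\,\theta[1;0](0,4\tau)\,\theta[0;1](0,4\tau)\{\theta^2[0;0](0,4\tau)+3\theta^2[1;0](0,4\tau)\}$, as asserted.
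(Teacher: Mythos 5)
Your proposal is correct and takes essentially the same approach as the paper's own proof: both start from Corollary \ref{coro:1/4-3/4} and reduce the numerator, the denominator, and the prefactor to theta constants at $4\tau$ by a double application of Lemma \ref{lem:Farkas-Kra}, arriving at the same intermediate identities (the numerator collapsing to a multiple of the $4\tau$-level combination containing the factor with coefficient $3$, the denominator and prefactor sharing the common factor that cancels) and the same final constant $-2\pi$. Nothing further is needed.
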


\begin{proof}
By Lemma \ref{lem:Farkas-Kra}, 
we have 
$$
\theta^2
\left[
\begin{array}{c}
\frac14 \\
\frac14
\end{array}
\right](0,\tau)
=
\theta
\left[
\begin{array}{c}
\frac14 \\
\frac12
\end{array}
\right](0,2\tau)
\theta
\left[
\begin{array}{c}
0 \\
0
\end{array}
\right](0,2\tau)
+\zeta_8^5
\theta
\left[
\begin{array}{c}
\frac34 \\
\frac32
\end{array}
\right](0,2\tau)
\theta
\left[
\begin{array}{c}
1\\
0
\end{array}
\right](0,2\tau),
$$
%%%%%%%%%%%%%%%%%%%%%%%%%%%%%%%%
%%%%%%%%%%%%%%%%%%%%%%%%%%%%%%%
$$
\theta^2
\left[
\begin{array}{c}
\frac14 \\
\frac34
\end{array}
\right](0,\tau)
=
\theta
\left[
\begin{array}{c}
\frac14 \\
\frac32
\end{array}
\right](0,2\tau)
\theta
\left[
\begin{array}{c}
0 \\
0
\end{array}
\right](0,2\tau)
+\zeta_8^5
\theta
\left[
\begin{array}{c}
\frac34 \\
\frac12
\end{array}
\right](0,2\tau)
\theta
\left[
\begin{array}{c}
1\\
0
\end{array}
\right](0,2\tau),
$$
%%%%%%%%%%%%%%%%%%%%%%%%%%%%
%%%%%%%%%%%%%%%%%%%%%%%%%%%%%%
$$
\theta^2
\left[
\begin{array}{c}
\frac14 \\
\frac54
\end{array}
\right](0,\tau)
=
\zeta_8
\theta
\left[
\begin{array}{c}
\frac14 \\
\frac12
\end{array}
\right](0,2\tau)
\theta
\left[
\begin{array}{c}
0 \\
0
\end{array}
\right](0,2\tau)
+\zeta_8^2
\theta
\left[
\begin{array}{c}
\frac34 \\
\frac32
\end{array}
\right](0,2\tau)
\theta
\left[
\begin{array}{c}
1\\
0
\end{array}
\right](0,2\tau),
$$
%%%%%%%%%%%%%%%%%%%%%%%%%%%%
%%%%%%%%%%%%%%%%%%%%%%%%%%%%%%%
$$
\theta^2
\left[
\begin{array}{c}
\frac14 \\
\frac74
\end{array}
\right](0,\tau)
=
\zeta_8
\theta
\left[
\begin{array}{c}
\frac14 \\
\frac32
\end{array}
\right](0,2\tau)
\theta
\left[
\begin{array}{c}
0 \\
0
\end{array}
\right](0,2\tau)
+\zeta_8^2
\theta
\left[
\begin{array}{c}
\frac34 \\
\frac12
\end{array}
\right](0,2\tau)
\theta
\left[
\begin{array}{c}
1\\
0
\end{array}
\right](0,2\tau).
$$
\par
Furthermore, by Lemma \ref{lem:Farkas-Kra}, 
we obtain
\begin{align*}
&
\theta^4
\left[
\begin{array}{c}
\frac14 \\
\frac14
\end{array}
\right](0,\tau)
-
\zeta_8^3
\theta^4
\left[
\begin{array}{c}
\frac14 \\
\frac34
\end{array}
\right](0,\tau)
+
\zeta_8^6
\theta^4
\left[
\begin{array}{c}
\frac14 \\
\frac54
\end{array}
\right](0,\tau)
-
\zeta_8
\theta^4
\left[
\begin{array}{c}
\frac14 \\
\frac74
\end{array}
\right](0,\tau)  \\
=&
2
\theta^2
\left[
\begin{array}{c}
0 \\
0
\end{array}
\right](0,2\tau)
\left\{
\theta^2
\left[
\begin{array}{c}
\frac14 \\
\frac12
\end{array}
\right](0,2\tau)
-
\zeta_8^3
\theta^2
\left[
\begin{array}{c}
\frac14 \\
\frac32
\end{array}
\right](0,2\tau)
\right\}   \\
&\hspace{15mm} 
-
2
\zeta_8^2
\theta^2
\left[
\begin{array}{c}
1 \\
0
\end{array}
\right](0,2\tau)
\left\{
\zeta_8^3
\theta^2
\left[
\begin{array}{c}
\frac14 \\
\frac12
\end{array}
\right](0,2\tau)
-
\theta^2
\left[
\begin{array}{c}
\frac34 \\
\frac32
\end{array}
\right](0,2\tau)
\right\}  \\
=&
4
\theta
\left[
\begin{array}{c}
\frac14 \\
1
\end{array}
\right](0,4\tau)
\theta
\left[
\begin{array}{c}
0 \\
0
\end{array}
\right](0,4\tau)
\left\{
\theta^2
\left[
\begin{array}{c}
0 \\
0
\end{array}
\right](0,4\tau)
+
3
\theta^2
\left[
\begin{array}{c}
1 \\
0
\end{array}
\right](0,4\tau)
\right\}. 
\end{align*} 
\par
In the same way, 
we have 
\begin{align*}
&
\theta
\left[
\begin{array}{c}
\frac14 \\
0
\end{array}
\right](0,\tau)
%%%%%%%%%%%%%%%%%
\theta
\left[
\begin{array}{c}
\frac14 \\
\frac12
\end{array}
\right](0,\tau)
%%%%%%%%%%%%%%%%%
\theta
\left[
\begin{array}{c}
\frac14 \\
1
\end{array}
\right](0,\tau)
%%%%%%%%%%%%%%%%%
\theta
\left[
\begin{array}{c}
\frac14 \\
\frac32
\end{array}
\right](0,\tau)  \\
%%%%%%%%%%%%%%%%%
=&
\zeta_8
\theta
\left[
\begin{array}{c}
\frac14 \\
1
\end{array}
\right](0,4\tau)
%%%%%%%%%%%%%%%%
\theta
\left[
\begin{array}{c}
0 \\
1
\end{array}
\right](0,4\tau)
%%%%%%%%%%%%%%%%%%%
\left\{
\theta^2
\left[
\begin{array}{c}
0 \\
0
\end{array}
\right](0,4\tau)
-
\theta
\left[
\begin{array}{c}
1 \\
0
\end{array}
\right](0,4\tau)
\right\},  \\
%%%%%%%%%%%%%%%%%%%%%%%%%
%%%%%%%%%%%%%%%%%%%%%%%%%%%%
&
\theta
\left[
\begin{array}{c}
0 \\
0
\end{array}
\right](0,\tau)
%%%%%%%%%%%%%%%%%
\theta
\left[
\begin{array}{c}
0 \\
1
\end{array}
\right](0,\tau)
%%%%%%%%%%%%%%%%%
\theta^2
\left[
\begin{array}{c}
1 \\
0
\end{array}
\right](0,\tau)
%%%%%%%%%%%%%%%%%
\theta^2
\left[
\begin{array}{c}
1 \\
\frac12
\end{array}
\right](0,\tau) \\
%%%%%%%%%%%%%%%%%
=&
4
\theta
\left[
\begin{array}{c}
0 \\
0
\end{array}
\right](0,4\tau)
%%%%%%%%%%%%%%%%
\theta
\left[
\begin{array}{c}
1 \\
0
\end{array}
\right](0,4\tau)
%%%%%%%%%%%%%%%%%%%
\theta^2
\left[
\begin{array}{c}
0\\
1
\end{array}
\right](0,4\tau) 
%%%%%%%%%%%%%%%%%%%
\left\{
\theta^2
\left[
\begin{array}{c}
0 \\
0
\end{array}
\right](0,4\tau)
-
\theta^2
\left[
\begin{array}{c}
1 \\
0
\end{array}
\right](0,4\tau)
\right\},  \\
\end{align*}
which proves the proposition. 
\end{proof}

\begin{theorem}
\label{thm:general-Jacobi-1/4-(1)}
{\it
For every $\tau\in\mathbb{H}^2,$ we have 
\begin{align*}
&
\theta^{\prime}
\left[
\begin{array}{c}
1 \\
\frac14
\end{array}
\right](0,\tau)  
=
\frac
{
-
2\pi 
\theta
\left[
\begin{array}{c}
1 \\
\frac14
\end{array}
\right](0,\tau)
\theta^2
\left[
\begin{array}{c}
0 \\
0
\end{array}
\right](0,4\tau)
}
{
\theta^4
\left[
\begin{array}{c}
1 \\
\frac14
\end{array}
\right](0,\tau)
-
\theta^4
\left[
\begin{array}{c}
1 \\
\frac34
\end{array}
\right](0,\tau)
}  \times \\
&\hspace{15mm}
\times
\left(
\theta
\left[
\begin{array}{c}
1 \\
0
\end{array}
\right](0,4\tau)
\theta
\left[
\begin{array}{c}
0 \\
1
\end{array}
\right](0,4\tau)  
%%%%%%%%%%%%%%%%%%%%%%%%%%%%%%%
%%%%%%%%%%%%%%%%%%%%%%%%%%%%%%%
\left\{
\theta^2
\left[
\begin{array}{c}
0 \\
0
\end{array}
\right](0,4\tau)
+
3
\theta^2
\left[
\begin{array}{c}
1 \\
0
\end{array}
\right](0,4\tau)
\right\}
+
\theta^4
\left[
\begin{array}{c}
1 \\
\frac34
\end{array}
\right](0,\tau)
\right),
\end{align*}
and
\begin{align*}
&
\theta^{\prime}
\left[
\begin{array}{c}
1 \\
\frac34
\end{array}
\right](0,\tau)  
=
\frac
{
-
2\pi 
\theta
\left[
\begin{array}{c}
1 \\
\frac34
\end{array}
\right](0,\tau)
\theta^2
\left[
\begin{array}{c}
0 \\
0
\end{array}
\right](0,4\tau)
}
{
\theta^4
\left[
\begin{array}{c}
1 \\
\frac14
\end{array}
\right](0,\tau)
-
\theta^4
\left[
\begin{array}{c}
1 \\
\frac34
\end{array}
\right](0,\tau)
}  \times \\
&\hspace{15mm}
\times
\left(
\theta
\left[
\begin{array}{c}
1 \\
0
\end{array}
\right](0,4\tau)
\theta
\left[
\begin{array}{c}
0 \\
1
\end{array}
\right](0,4\tau)  
%%%%%%%%%%%%%%%%%%%%%%%%%%%%%%%
%%%%%%%%%%%%%%%%%%%%%%%%%%%%%%%
\left\{
\theta^2
\left[
\begin{array}{c}
0 \\
0
\end{array}
\right](0,4\tau)
+
3
\theta^2
\left[
\begin{array}{c}
1 \\
0
\end{array}
\right](0,4\tau)
\right\}
+
\theta^4
\left[
\begin{array}{c}
1 \\
\frac14
\end{array}
\right](0,\tau)
\right).
\end{align*}
}
\end{theorem}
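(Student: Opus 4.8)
The plan is to read off the two individual derivatives from a $2\times2$ linear system in the logarithmic derivatives
$$
A := \frac{\theta^{\prime}\left[\begin{array}{c}1\\\frac14\end{array}\right](0,\tau)}{\theta\left[\begin{array}{c}1\\\frac14\end{array}\right](0,\tau)},
\qquad
B := \frac{\theta^{\prime}\left[\begin{array}{c}1\\\frac34\end{array}\right](0,\tau)}{\theta\left[\begin{array}{c}1\\\frac34\end{array}\right](0,\tau)}.
$$
Both relations needed are already available. Equation (\ref{eqn-log-diff-difference-(1)}) of Proposition \ref{prop:1/4-pm-3/4} gives $A-B=D$, where $D:=2\pi\,\theta^2\left[\begin{array}{c}0\\0\end{array}\right](0,4\tau)$, while Proposition \ref{prop:Jacobi-1/4-(1)} gives $aA-bB=R$, with the shorthand $a:=\theta^4\left[\begin{array}{c}1\\\frac14\end{array}\right](0,\tau)$, $b:=\theta^4\left[\begin{array}{c}1\\\frac34\end{array}\right](0,\tau)$ and $R$ equal to the right-hand side of that proposition.

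First I would solve this system by elimination. Its determinant is $b-a$, so for $a\neq b$ one finds
$$
A=\frac{R-bD}{a-b},\qquad B=\frac{R-aD}{a-b}.
$$
Multiplying the first by $\theta\left[\begin{array}{c}1\\\frac14\end{array}\right](0,\tau)$ and the second by $\theta\left[\begin{array}{c}1\\\frac34\end{array}\right](0,\tau)$ produces the two claimed expressions for $\theta^{\prime}\left[\begin{array}{c}1\\\frac14\end{array}\right](0,\tau)$ and $\theta^{\prime}\left[\begin{array}{c}1\\\frac34\end{array}\right](0,\tau)$.

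What remains is routine regrouping. Substituting the explicit $R$ and noting $bD=2\pi\,\theta^2\left[\begin{array}{c}0\\0\end{array}\right](0,4\tau)\,b$, the numerator $R-bD$ factors out the common $-2\pi\,\theta^2\left[\begin{array}{c}0\\0\end{array}\right](0,4\tau)$ and leaves exactly $\theta\left[\begin{array}{c}1\\0\end{array}\right](0,4\tau)\theta\left[\begin{array}{c}0\\1\end{array}\right](0,4\tau)\{\theta^2\left[\begin{array}{c}0\\0\end{array}\right](0,4\tau)+3\,\theta^2\left[\begin{array}{c}1\\0\end{array}\right](0,4\tau)\}+b$, which matches the first formula of the theorem; the computation for $R-aD$ is identical with the roles of $a$ and $b$ interchanged, giving the second. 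The only point deserving a remark is the legitimacy of dividing by $a-b=\theta^4\left[\begin{array}{c}1\\\frac14\end{array}\right](0,\tau)-\theta^4\left[\begin{array}{c}1\\\frac34\end{array}\right](0,\tau)$; this quantity is not identically zero, as one sees from the $q$-expansions obtained from Jacobi's triple product identity (\ref{eqn:Jacobi-triple}), so the inversion is valid wherever $a\neq b$ and the resulting identity of meromorphic functions then holds on all of $\mathbb{H}^2$. There is no deeper obstacle: the entire content of the theorem is packaged in Propositions \ref{prop:1/4-pm-3/4} and \ref{prop:Jacobi-1/4-(1)}, and the remaining work is elementary linear algebra followed by a single factorization.
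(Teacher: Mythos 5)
Your proposal is correct and takes essentially the same approach as the paper: the paper's proof likewise first verifies, via the limit $\tau\to i\infty$, that $\theta^4\left[\begin{array}{c}1\\ \frac14\end{array}\right](0,\tau)-\theta^4\left[\begin{array}{c}1\\ \frac34\end{array}\right](0,\tau)\not\equiv 0$, and then obtains the theorem by combining equation (\ref{eqn-log-diff-difference-(1)}) with Proposition \ref{prop:Jacobi-1/4-(1)}, i.e., by solving exactly the $2\times 2$ linear system in the two logarithmic derivatives that you describe. Your write-up merely makes the elimination and the final factorization explicit, details the paper leaves to the reader.
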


\begin{proof}
We first note that 
$$
\theta^4
\left[
\begin{array}{c}
1 \\
\frac14
\end{array}
\right](0,\tau)
-
\theta^4
\left[
\begin{array}{c}
1 \\
\frac34
\end{array}
\right](0,\tau)
=
\theta^4
\left[
\begin{array}{c}
1 \\
\frac14
\end{array}
\right](0,\tau)
\left(
1
-
\left\{
\frac
{
\theta
\left[
\begin{array}{c}
1 \\
\frac34
\end{array}
\right](0,\tau)
}
{
\theta
\left[
\begin{array}{c}
1 \\
\frac14
\end{array}
\right](0,\tau)
}
\right\}^4
\right). 
$$
Taking the limit $\tau\longrightarrow i\infty,$ 
we have 
\begin{align*}
1
-
\left\{
\frac
{
\theta
\left[
\begin{array}{c}
1 \\
\frac34
\end{array}
\right](0,i\infty)
}
{
\theta
\left[
\begin{array}{c}
1 \\
\frac14
\end{array}
\right](0,i\infty)
}
\right\}^4
=&
1-
\left(
\frac
{\cos^2 \frac{3\pi}{8}}
{\cos^2 \frac{\pi}{8}}
\right)^2  \\
=&12\sqrt{2}-16\neq 0,
\end{align*}
which implies that 
$$
\theta^4
\left[
\begin{array}{c}
1 \\
\frac14
\end{array}
\right](0,\tau)
-
\theta^4
\left[
\begin{array}{c}
1 \\
\frac34
\end{array}
\right](0,\tau)
\not\equiv 0.
$$
\par
Considering equation (\ref{eqn-log-diff-difference-(1)}) and Proposition \ref{prop:Jacobi-1/4-(1)}, 
we obtain the theorem.
\end{proof}

\begin{theorem}
\label{thm:general-Jacobi-1/4-(2)}
{\it
For every $\tau\in\mathbb{H}^2,$ we have 
\begin{align*}
&
\theta^{\prime}
\left[
\begin{array}{c}
1 \\
\frac14
\end{array}
\right](0,\tau)  
=
\frac
{
-
\pi 
\theta
\left[
\begin{array}{c}
1 \\
\frac14
\end{array}
\right](0,\tau)
\theta
\left[
\begin{array}{c}
0 \\
0
\end{array}
\right](0,4\tau)
}
{
\theta^4
\left[
\begin{array}{c}
1 \\
\frac14
\end{array}
\right](0,\tau)
+
\theta^4
\left[
\begin{array}{c}
1 \\
\frac34
\end{array}
\right](0,\tau)
}  \times \\
&\hspace{0mm}
\times
\Bigg(
\theta^2
\left[
\begin{array}{c}
1 \\
0
\end{array}
\right](0,2\tau)
\theta
\left[
\begin{array}{c}
1 \\
0
\end{array}
\right](0,4\tau)  
%%%%%%%%%%%%%%%%%%%%%%%%%%%%%%%
%%%%%%%%%%%%%%%%%%%%%%%%%%%%%%%
\left\{
\theta^2
\left[
\begin{array}{c}
0 \\
0
\end{array}
\right](0,4\tau)
+
3
\theta^2
\left[
\begin{array}{c}
1 \\
0
\end{array}
\right](0,4\tau)
\right\}  \\
&\hspace{50mm}
+
2\sqrt{2}
\theta^4
\left[
\begin{array}{c}
1 \\
\frac34
\end{array}
\right](0,\tau)
\theta
\left[
\begin{array}{c}
0 \\
0
\end{array}
\right](0,2\tau)
\Bigg),
\end{align*}
and
\begin{align*}
&
\theta^{\prime}
\left[
\begin{array}{c}
1 \\
\frac34
\end{array}
\right](0,\tau)  
=
\frac
{
\pi 
\theta
\left[
\begin{array}{c}
1 \\
\frac34
\end{array}
\right](0,\tau)
\theta
\left[
\begin{array}{c}
0 \\
0
\end{array}
\right](0,4\tau)
}
{
\theta^4
\left[
\begin{array}{c}
1 \\
\frac14
\end{array}
\right](0,\tau)
+
\theta^4
\left[
\begin{array}{c}
1 \\
\frac34
\end{array}
\right](0,\tau)
}  \times \\
&\hspace{0mm}
\times
\Bigg(
\theta^2
\left[
\begin{array}{c}
1 \\
0
\end{array}
\right](0,2\tau)
\theta
\left[
\begin{array}{c}
1 \\
0
\end{array}
\right](0,4\tau)  
%%%%%%%%%%%%%%%%%%%%%%%%%%%%%%%
%%%%%%%%%%%%%%%%%%%%%%%%%%%%%%%
\left\{
\theta^2
\left[
\begin{array}{c}
0 \\
0
\end{array}
\right](0,4\tau)
+
3
\theta^2
\left[
\begin{array}{c}
1 \\
0
\end{array}
\right](0,4\tau)
\right\}  \\
&\hspace{35mm}
-
2\sqrt{2}
\theta^4
\left[
\begin{array}{c}
1 \\
\frac14
\end{array}
\right](0,\tau)
\theta
\left[
\begin{array}{c}
0 \\
0
\end{array}
\right](0,2\tau)
\Bigg).
\end{align*}
}
\end{theorem}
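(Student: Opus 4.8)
The plan is to repeat the argument used for Theorem \ref{thm:general-Jacobi-1/4-(1)} almost verbatim, feeding in the \emph{sum} identity (\ref{eqn-log-diff-difference-(2)}) wherever that proof used the \emph{difference} identity (\ref{eqn-log-diff-difference-(1)}). Write
$$
A=\frac{\theta^{\prime}\left[\begin{smallmatrix}1\\ \frac14\end{smallmatrix}\right]}{\theta\left[\begin{smallmatrix}1\\ \frac14\end{smallmatrix}\right]},\qquad
B=\frac{\theta^{\prime}\left[\begin{smallmatrix}1\\ \frac34\end{smallmatrix}\right]}{\theta\left[\begin{smallmatrix}1\\ \frac34\end{smallmatrix}\right]},
$$
and
$$
P=\theta^{4}\left[\begin{smallmatrix}1\\ \frac14\end{smallmatrix}\right],\qquad
Q=\theta^{4}\left[\begin{smallmatrix}1\\ \frac34\end{smallmatrix}\right].
$$
Proposition \ref{prop:Jacobi-1/4-(1)} is then the single relation $PA-QB=R$, where $R$ denotes its right-hand side, while (\ref{eqn-log-diff-difference-(2)}) reads $A+B=S$, with $S$ its right-hand side. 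This is exactly the $2\times2$ linear system in $A,B$ that was solved in Theorem \ref{thm:general-Jacobi-1/4-(1)}, the only change being that the second equation now carries a plus sign; eliminating $B$ and then $A$ gives $(P+Q)A=R+QS$ and $(P+Q)B=PS-R$.

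Before dividing I must check that the new denominator $P+Q$ is not identically zero. Following the corresponding step in Theorem \ref{thm:general-Jacobi-1/4-(1)}, I would factor $P+Q=P\,\Bigl(1+\bigl(\theta\left[\begin{smallmatrix}1\\ \frac34\end{smallmatrix}\right]/\theta\left[\begin{smallmatrix}1\\ \frac14\end{smallmatrix}\right]\bigr)^{4}\Bigr)$ and let $\tau\to i\infty$; the ratio tends to $\cos(3\pi/8)/\cos(\pi/8)$, so the bracketed factor tends to $1+(\cos(3\pi/8)/\cos(\pi/8))^{4}=18-12\sqrt{2}>0$. Hence $P+Q\not\equiv0$, the quotients are legitimate, and multiplying through by the respective theta constants turns $A$ and $B$ into $\theta^{\prime}\left[\begin{smallmatrix}1\\ \frac14\end{smallmatrix}\right]=\theta\left[\begin{smallmatrix}1\\ \frac14\end{smallmatrix}\right](R+QS)/(P+Q)$ and $\theta^{\prime}\left[\begin{smallmatrix}1\\ \frac34\end{smallmatrix}\right]=\theta\left[\begin{smallmatrix}1\\ \frac34\end{smallmatrix}\right](PS-R)/(P+Q)$.

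The only remaining task is to massage $R+QS$ and $PS-R$ into the precise products displayed in the theorem. The single nontrivial ingredient is the duplication identity
$$
\theta^{2}\left[\begin{smallmatrix}1\\ 0\end{smallmatrix}\right](0,2\tau)=2\,\theta\left[\begin{smallmatrix}0\\ 0\end{smallmatrix}\right](0,4\tau)\,\theta\left[\begin{smallmatrix}1\\ 0\end{smallmatrix}\right](0,4\tau),
$$
which is the $\epsilon=\delta=1,\ \epsilon^{\prime}=\delta^{\prime}=0$ instance of Lemma \ref{lem:Farkas-Kra} taken at modulus $2\tau$ (using $\theta\left[\begin{smallmatrix}2\\ 0\end{smallmatrix}\right]=\theta\left[\begin{smallmatrix}0\\ 0\end{smallmatrix}\right]$). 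After one power of $\theta\left[\begin{smallmatrix}0\\ 0\end{smallmatrix}\right](0,4\tau)$ is extracted into the prefactor, the $R$-contribution to $R+QS$ carries the factor $2\,\theta\left[\begin{smallmatrix}0\\ 0\end{smallmatrix}\right](0,4\tau)\theta\left[\begin{smallmatrix}1\\ 0\end{smallmatrix}\right](0,4\tau)$, which this identity collapses to $\theta^{2}\left[\begin{smallmatrix}1\\ 0\end{smallmatrix}\right](0,2\tau)$, producing the first summand inside the large parenthesis, while the $QS$-contribution supplies the $2\sqrt{2}$-summand; pulling out the common factor $-\pi\,\theta\left[\begin{smallmatrix}1\\ \frac14\end{smallmatrix}\right]\theta\left[\begin{smallmatrix}0\\ 0\end{smallmatrix}\right](0,4\tau)/(P+Q)$ then yields the first assertion. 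The second follows identically from $(P+Q)B=PS-R$, the extra minus sign in $-R$ flipping both the sign of the $\sqrt{2}$-summand and the overall prefactor from $-\pi$ to $+\pi$. I expect the only real difficulty to lie in this last bookkeeping: keeping track of the factor of $2$ that the duplication formula absorbs together with the signs introduced when eliminating $B$ from the plus-version of the system, so that the prefactors $\mp\pi$ and the relative sign of the $\sqrt{2}$-term emerge correctly.
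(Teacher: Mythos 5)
Your proposal is, in outline, exactly the paper's own proof: the paper also solves the $2\times 2$ linear system formed by Proposition \ref{prop:Jacobi-1/4-(1)} together with equation (\ref{eqn-log-diff-difference-(2)}), verifies $\theta^4\left[\begin{smallmatrix}1\\ \frac14\end{smallmatrix}\right](0,\tau)+\theta^4\left[\begin{smallmatrix}1\\ \frac34\end{smallmatrix}\right](0,\tau)\not\equiv 0$ by letting $\tau\to i\infty$, and then invokes the duplication identity $\theta^2\left[\begin{smallmatrix}1\\ 0\end{smallmatrix}\right](0,\tau)=2\,\theta\left[\begin{smallmatrix}0\\ 0\end{smallmatrix}\right](0,2\tau)\,\theta\left[\begin{smallmatrix}1\\ 0\end{smallmatrix}\right](0,2\tau)$ from Lemma \ref{lem:Farkas-Kra}. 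Incidentally, your limiting value $18-12\sqrt{2}$ is the correct one (the paper's $12\sqrt{2}+18$ is a misprint); both are nonzero, so that step is fine either way.

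The gap lies in the one step you chose to defer. Carrying out ``this last bookkeeping'' does \emph{not} produce the displayed statement. Write $K=\theta^2\left[\begin{smallmatrix}0\\ 0\end{smallmatrix}\right](0,4\tau)+3\,\theta^2\left[\begin{smallmatrix}1\\ 0\end{smallmatrix}\right](0,4\tau)$, so that Proposition \ref{prop:Jacobi-1/4-(1)} gives $R=-2\pi\,\theta^2\left[\begin{smallmatrix}0\\ 0\end{smallmatrix}\right](0,4\tau)\,\theta\left[\begin{smallmatrix}1\\ 0\end{smallmatrix}\right](0,4\tau)\,\theta\left[\begin{smallmatrix}0\\ 1\end{smallmatrix}\right](0,4\tau)\,K$. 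After you pull one $\theta\left[\begin{smallmatrix}0\\ 0\end{smallmatrix}\right](0,4\tau)$ into the prefactor and collapse $2\,\theta\left[\begin{smallmatrix}0\\ 0\end{smallmatrix}\right](0,4\tau)\,\theta\left[\begin{smallmatrix}1\\ 0\end{smallmatrix}\right](0,4\tau)$ into $\theta^2\left[\begin{smallmatrix}1\\ 0\end{smallmatrix}\right](0,2\tau)$, the factor of $R$ that survives is $\theta\left[\begin{smallmatrix}0\\ 1\end{smallmatrix}\right](0,4\tau)$, so the first summand you actually obtain is $\theta^2\left[\begin{smallmatrix}1\\ 0\end{smallmatrix}\right](0,2\tau)\,\theta\left[\begin{smallmatrix}0\\ 1\end{smallmatrix}\right](0,4\tau)\,K$; the characteristic at $4\tau$ is $\left[\begin{smallmatrix}0\\ 1\end{smallmatrix}\right]$, not the $\left[\begin{smallmatrix}1\\ 0\end{smallmatrix}\right]$ printed in the theorem. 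The discrepancy is not cosmetic: the printed right-hand side is in fact false. Expanding in $q=\exp(\pi i\tau)$, the left-hand side of the first formula is $-2\pi\sin\frac{\pi}{8}\,q^{1/4}+O(q^{9/4})$, while the printed right-hand side is $-2\pi\cos\frac{\pi}{8}\left(\sqrt{2}-\frac{4}{3}\right)q^{1/4}+O(q^{5/4})$, and $\tan\frac{\pi}{8}=\sqrt{2}-1\neq\sqrt{2}-\frac{4}{3}$; with $\theta\left[\begin{smallmatrix}0\\ 1\end{smallmatrix}\right](0,4\tau)$ in place of $\theta\left[\begin{smallmatrix}1\\ 0\end{smallmatrix}\right](0,4\tau)$ the leading terms do agree. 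So your method is sound (it is the paper's), and what it proves is a corrected form of the theorem --- the same correction is needed in the $\theta^{\prime}\left[\begin{smallmatrix}1\\ \frac34\end{smallmatrix}\right]$ formula --- but your assertion that the collapse ``produces the first summand inside the large parenthesis'' is precisely the claim that fails for the statement as printed, and it is the one claim you did not check.
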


\begin{proof}
We first note that 
$$
\theta^4
\left[
\begin{array}{c}
1 \\
\frac14
\end{array}
\right](0,\tau)
+
\theta^4
\left[
\begin{array}{c}
1 \\
\frac34
\end{array}
\right](0,\tau)
=
\theta^4
\left[
\begin{array}{c}
1 \\
\frac14
\end{array}
\right](0,\tau)
\left(
1
+
\left\{
\frac
{
\theta
\left[
\begin{array}{c}
1 \\
\frac34
\end{array}
\right](0,\tau)
}
{
\theta
\left[
\begin{array}{c}
1 \\
\frac14
\end{array}
\right](0,\tau)
}
\right\}^4
\right). 
$$
Taking the limit $\tau\longrightarrow i\infty,$ 
we have 
\begin{align*}
1
+
\left\{
\frac
{
\theta
\left[
\begin{array}{c}
1 \\
\frac34
\end{array}
\right](0,i\infty)
}
{
\theta
\left[
\begin{array}{c}
1 \\
\frac14
\end{array}
\right](0,i\infty)
}
\right\}^4
=&
1+
\left(
\frac
{\cos^2 \frac{3\pi}{8}}
{\cos^2 \frac{\pi}{8}}
\right)^2  \\
=&12\sqrt{2}+18\neq 0,
\end{align*}
which implies that 
$$
\theta^4
\left[
\begin{array}{c}
1 \\
\frac14
\end{array}
\right](0,\tau)
+
\theta^4
\left[
\begin{array}{c}
1 \\
\frac34
\end{array}
\right](0,\tau)
\not\equiv 0.
$$
\par
By Lemma \ref{lem:Farkas-Kra}, 
we next note that 
$$
\theta^2
\left[
\begin{array}{c}
1 \\
0
\end{array}
\right](0,\tau)
=2
\theta
\left[
\begin{array}{c}
0 \\
0
\end{array}
\right](0,2\tau)
\theta
\left[
\begin{array}{c}
1 \\
0
\end{array}
\right](0,2\tau).
$$
\par
Considering equation (\ref{eqn-log-diff-difference-(2)}) and Proposition \ref{prop:Jacobi-1/4-(1)}, 
we obtain the theorem.
\end{proof}

\section{More theta constant identities}

\begin{theorem}
{\it
For every $\tau\in\mathbb{H}^2,$ 
we have 
\begin{align}
&\theta 
\left[
\begin{array}{c}
1 \\
0
\end{array}
\right]
%%%%%
\theta^3 
\left[
\begin{array}{c}
1 \\
\frac12
\end{array}
\right]
-
\theta 
\left[
\begin{array}{c}
1 \\
\frac14
\end{array}
\right]
%%%%%
\theta^3 
\left[
\begin{array}{c}
1 \\
\frac34
\end{array}
\right]
-
\theta 
\left[
\begin{array}{c}
1 \\
\frac34
\end{array}
\right]
%%%%%
\theta^3 
\left[
\begin{array}{c}
1 \\
\frac14
\end{array}
\right]=0,    \label{theta-constant-identity(1)}  \\
%%%%%%%%%%%%%%%%%%%%%%%%%%%
%%%%%%%%%%%%%%%%%%%%%%%%%%%
%%%%%%%%%%%%%%%%%%%%%%%%%%
&\theta^2 
\left[
\begin{array}{c}
1 \\
0
\end{array}
\right]
%%%%%
\theta 
\left[
\begin{array}{c}
1 \\
\frac14
\end{array}
\right]
%%%%%%%%%%
\theta 
\left[
\begin{array}{c}
1 \\
\frac34
\end{array}
\right]
%%%%%%%%
%%%%%%%%
-
\theta^2 
\left[
\begin{array}{c}
1 \\
\frac14
\end{array}
\right]
%%%%%
\theta^2
\left[
\begin{array}{c}
1 \\
\frac12
\end{array}
\right]
+
\theta^2 
\left[
\begin{array}{c}
1 \\
\frac12
\end{array}
\right]
%%%%%
\theta^2 
\left[
\begin{array}{c}
1 \\
\frac34
\end{array}
\right]=0,    \label{theta-constant-identity(2)}  \\
%%%%%%%%%%%%%%%%%%%%%%%%%%%%%%%%%%%%%%%
%%%%%%%%%%%%%%%%%%%%%%%%%%%%%%%%%%%%%%%%
&\theta^4
\left[
\begin{array}{c}
1 \\
\frac14
\end{array}
\right]
%%%%%%%%
%%%%%%%%
-
\theta^4 
\left[
\begin{array}{c}
1 \\
\frac34
\end{array}
\right]
%%%%%%%%%%%
%%%%%%%%%%
-
\theta
\left[
\begin{array}{c}
1 \\
\frac12
\end{array}
\right]
%%%%%
\theta^3 
\left[
\begin{array}{c}
1 \\
0
\end{array}
\right]=0.     \label{theta-constant-identity(3)}
\end{align}
}
\end{theorem}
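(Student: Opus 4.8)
The plan is to prove all three identities simultaneously by feeding every product of two theta constants that occurs into the lemma of Farkas and Kra (Lemma~\ref{lem:Farkas-Kra}), which rewrites a product $\theta\!\left[\begin{smallmatrix}\epsilon\\ \epsilon'\end{smallmatrix}\right](0,\tau)\,\theta\!\left[\begin{smallmatrix}\delta\\ \delta'\end{smallmatrix}\right](0,\tau)$ at argument $\tau$ as a sum of two products of theta constants at argument $2\tau$. Since every monomial in (\ref{theta-constant-identity(1)})--(\ref{theta-constant-identity(3)}) is homogeneous of degree four in the constants $\theta\!\left[\begin{smallmatrix}1\\ \epsilon'\end{smallmatrix}\right](0,\tau)$ with $\epsilon'\in\{0,\tfrac14,\tfrac12,\tfrac34\}$, it should be enough to record the ``doubling'' of the handful of quadratic combinations that appear and then substitute.

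First I would apply Lemma~\ref{lem:Farkas-Kra} with $\epsilon=\delta=1$ to each needed pair and simplify the resulting characteristics using the periodicity relation $\theta\!\left[\begin{smallmatrix}\epsilon+2m\\ \epsilon'+2n\end{smallmatrix}\right]=e^{\pi i\epsilon n}\theta\!\left[\begin{smallmatrix}\epsilon\\ \epsilon'\end{smallmatrix}\right]$, the evenness relation $\theta\!\left[\begin{smallmatrix}-\epsilon\\ -\epsilon'\end{smallmatrix}\right](0,\tau)=\theta\!\left[\begin{smallmatrix}\epsilon\\ \epsilon'\end{smallmatrix}\right](0,\tau)$, and the vanishing $\theta\!\left[\begin{smallmatrix}1\\1\end{smallmatrix}\right](0,\tau)=0$. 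I expect this to produce
\begin{gather*}
\theta^2\!\left[\begin{smallmatrix}1\\0\end{smallmatrix}\right]=2\,\theta\!\left[\begin{smallmatrix}0\\0\end{smallmatrix}\right]\theta\!\left[\begin{smallmatrix}1\\0\end{smallmatrix}\right],\quad
\theta^2\!\left[\begin{smallmatrix}1\\ \frac12\end{smallmatrix}\right]=\theta\!\left[\begin{smallmatrix}0\\1\end{smallmatrix}\right]\theta\!\left[\begin{smallmatrix}1\\0\end{smallmatrix}\right],\\
\theta\!\left[\begin{smallmatrix}1\\0\end{smallmatrix}\right]\theta\!\left[\begin{smallmatrix}1\\ \frac12\end{smallmatrix}\right]=2\,\theta\!\left[\begin{smallmatrix}1\\ \frac12\end{smallmatrix}\right]\theta\!\left[\begin{smallmatrix}0\\ \frac12\end{smallmatrix}\right],\quad
\theta\!\left[\begin{smallmatrix}1\\ \frac14\end{smallmatrix}\right]\theta\!\left[\begin{smallmatrix}1\\ \frac34\end{smallmatrix}\right]=\theta\!\left[\begin{smallmatrix}0\\1\end{smallmatrix}\right]\theta\!\left[\begin{smallmatrix}1\\ \frac12\end{smallmatrix}\right],\\
\theta^2\!\left[\begin{smallmatrix}1\\ \frac14\end{smallmatrix}\right]-\theta^2\!\left[\begin{smallmatrix}1\\ \frac34\end{smallmatrix}\right]=2\,\theta\!\left[\begin{smallmatrix}0\\0\end{smallmatrix}\right]\theta\!\left[\begin{smallmatrix}1\\ \frac12\end{smallmatrix}\right],\quad
\theta^2\!\left[\begin{smallmatrix}1\\ \frac14\end{smallmatrix}\right]+\theta^2\!\left[\begin{smallmatrix}1\\ \frac34\end{smallmatrix}\right]=2\,\theta\!\left[\begin{smallmatrix}1\\0\end{smallmatrix}\right]\theta\!\left[\begin{smallmatrix}0\\ \frac12\end{smallmatrix}\right],
\end{gather*}
where every constant on the left is taken at $(0,\tau)$ and every constant on the right at $(0,2\tau)$.

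With these doubling formulas in hand, each identity should collapse by inspection. For (\ref{theta-constant-identity(1)}) I would write the last two monomials as $\theta\!\left[\begin{smallmatrix}1\\ \frac14\end{smallmatrix}\right]\theta\!\left[\begin{smallmatrix}1\\ \frac34\end{smallmatrix}\right]\bigl(\theta^2\!\left[\begin{smallmatrix}1\\ \frac14\end{smallmatrix}\right]+\theta^2\!\left[\begin{smallmatrix}1\\ \frac34\end{smallmatrix}\right]\bigr)$ and the first as $\bigl(\theta\!\left[\begin{smallmatrix}1\\0\end{smallmatrix}\right]\theta\!\left[\begin{smallmatrix}1\\ \frac12\end{smallmatrix}\right]\bigr)\theta^2\!\left[\begin{smallmatrix}1\\ \frac12\end{smallmatrix}\right]$; both reduce to $2\,\theta\!\left[\begin{smallmatrix}1\\0\end{smallmatrix}\right]\theta\!\left[\begin{smallmatrix}0\\1\end{smallmatrix}\right]\theta\!\left[\begin{smallmatrix}1\\ \frac12\end{smallmatrix}\right]\theta\!\left[\begin{smallmatrix}0\\ \frac12\end{smallmatrix}\right]$ at $2\tau$. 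For (\ref{theta-constant-identity(2)}) I would group it as $\theta^2\!\left[\begin{smallmatrix}1\\0\end{smallmatrix}\right]\theta\!\left[\begin{smallmatrix}1\\ \frac14\end{smallmatrix}\right]\theta\!\left[\begin{smallmatrix}1\\ \frac34\end{smallmatrix}\right]-\theta^2\!\left[\begin{smallmatrix}1\\ \frac12\end{smallmatrix}\right]\bigl(\theta^2\!\left[\begin{smallmatrix}1\\ \frac14\end{smallmatrix}\right]-\theta^2\!\left[\begin{smallmatrix}1\\ \frac34\end{smallmatrix}\right]\bigr)$ and check that both products equal $2\,\theta\!\left[\begin{smallmatrix}0\\0\end{smallmatrix}\right]\theta\!\left[\begin{smallmatrix}1\\0\end{smallmatrix}\right]\theta\!\left[\begin{smallmatrix}0\\1\end{smallmatrix}\right]\theta\!\left[\begin{smallmatrix}1\\ \frac12\end{smallmatrix}\right]$. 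Finally (\ref{theta-constant-identity(3)}) factors as a difference of squares, and using the two $\pm$ formulas together with $\theta^2\!\left[\begin{smallmatrix}1\\0\end{smallmatrix}\right]$ and $\theta\!\left[\begin{smallmatrix}1\\0\end{smallmatrix}\right]\theta\!\left[\begin{smallmatrix}1\\ \frac12\end{smallmatrix}\right]$ both sides reduce to $4\,\theta\!\left[\begin{smallmatrix}0\\0\end{smallmatrix}\right]\theta\!\left[\begin{smallmatrix}1\\0\end{smallmatrix}\right]\theta\!\left[\begin{smallmatrix}0\\ \frac12\end{smallmatrix}\right]\theta\!\left[\begin{smallmatrix}1\\ \frac12\end{smallmatrix}\right]$; note also that (\ref{theta-constant-identity(3)}) is then an algebraic consequence of the other two, since multiplying the value of $\theta^2\!\left[\begin{smallmatrix}1\\ \frac14\end{smallmatrix}\right]-\theta^2\!\left[\begin{smallmatrix}1\\ \frac34\end{smallmatrix}\right]$ forced by (\ref{theta-constant-identity(2)}) by that of $\theta^2\!\left[\begin{smallmatrix}1\\ \frac14\end{smallmatrix}\right]+\theta^2\!\left[\begin{smallmatrix}1\\ \frac34\end{smallmatrix}\right]$ forced by (\ref{theta-constant-identity(1)}) reproduces the right-hand side.

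The work is almost entirely bookkeeping, so the delicate point will be the reduction of the shifted characteristics modulo $2$. In particular the sign in $\theta\!\left[\begin{smallmatrix}1\\ \frac32\end{smallmatrix}\right](0,2\tau)=-\theta\!\left[\begin{smallmatrix}1\\ \frac12\end{smallmatrix}\right](0,2\tau)$, coming from the factor $e^{\pi i\epsilon n}$ with $\epsilon=n=1$, is exactly what separates the ``$-$'' doubling formula from the ``$+$'' one and hence makes the three identities hold rather than fail. I would therefore verify each characteristic reduction explicitly before substituting.
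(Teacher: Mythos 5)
Your proof is correct, and it is a genuinely different argument from the paper's. I checked your six doubling formulas against Lemma \ref{lem:Farkas-Kra} together with the periodicity and evenness rules, including the one delicate sign $\theta\!\left[\begin{smallmatrix}1\\ 3/2\end{smallmatrix}\right](0,2\tau)=-\theta\!\left[\begin{smallmatrix}1\\ 1/2\end{smallmatrix}\right](0,2\tau)$ that separates your two $\pm$ formulas, and with them each of (\ref{theta-constant-identity(1)})--(\ref{theta-constant-identity(3)}) does collapse to an equality of identical degree-four monomials at $2\tau$ exactly as you describe; your closing observation that (\ref{theta-constant-identity(3)}) is an algebraic consequence of the other two is also valid, since the relevant theta constants are nonvanishing. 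The paper, by contrast, computes no doubling formulas at all: it considers the $\zeta$-dependent products $\theta\!\left[\begin{smallmatrix}1/4\\ 1/4\end{smallmatrix}\right](\zeta,\tau)\,\theta\!\left[\begin{smallmatrix}1/4\\ 7/4\end{smallmatrix}\right](\zeta,\tau)$, $\theta\!\left[\begin{smallmatrix}1/4\\ 2/4\end{smallmatrix}\right](\zeta,\tau)\,\theta\!\left[\begin{smallmatrix}1/4\\ 6/4\end{smallmatrix}\right](\zeta,\tau)$, $\theta\!\left[\begin{smallmatrix}1/4\\ 3/4\end{smallmatrix}\right](\zeta,\tau)\,\theta\!\left[\begin{smallmatrix}1/4\\ 5/4\end{smallmatrix}\right](\zeta,\tau)$, $\theta^2\!\left[\begin{smallmatrix}1/4\\ 1\end{smallmatrix}\right](\zeta,\tau)$, which all lie in the space $\mathcal{F}_{2}\!\left[\begin{smallmatrix}2/4\\ 0\end{smallmatrix}\right]$ of second-order theta functions; since $\dim\mathcal{F}_{2}\!\left[\begin{smallmatrix}2/4\\ 0\end{smallmatrix}\right]=2$, these four functions satisfy a nontrivial linear relation, and evaluating that relation at the points $\zeta=(3\tau+k)/8$ with $|k|\le 3$ produces a homogeneous system $A\mathbf{x}=\mathbf{0}$ whose $4\times 4$ coefficient matrix $A$ is skew-symmetric with entries given by the quadratic theta monomials occurring in (\ref{theta-constant-identity(1)}); the existence of a nontrivial solution forces $\det A=0$, and since $\det A=(\mathrm{Pf}\,A)^2$ with $\mathrm{Pf}\,A$ equal (up to sign) to the left-hand side of (\ref{theta-constant-identity(1)}), the identity follows, while (\ref{theta-constant-identity(2)}) and (\ref{theta-constant-identity(3)}) come from two other quadruples in the same two-dimensional space. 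Each route buys something: yours is mechanical once Lemma \ref{lem:Farkas-Kra} is granted, exhibits both sides of every identity as one explicit product of four theta constants at level $2\tau$ (thereby explaining the identities via level doubling), and exposes the dependence of (\ref{theta-constant-identity(3)}) on the other two; the paper's argument requires no characteristic bookkeeping, explains the Pfaffian structure emphasized in its concluding Remark, and serves as a template for generating further identities from any four elements of a two-dimensional space of second-order theta functions.
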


\begin{proof}
We first prove equation (\ref{theta-constant-identity(1)}). 
Equations (\ref{theta-constant-identity(2)}) and (\ref{theta-constant-identity(3)}) can be proved in the same way. 
\par
For this purpose, 
we note that 
$
\dim 
\mathcal{F}_{2}\left[
\begin{array}{c}
\frac24 \\
0
\end{array}
\right] =2$ and 
\begin{alignat*}{2}
&\theta 
\left[
\begin{array}{c}
\frac14 \\
\frac14
\end{array}
\right] (\zeta, \tau)
\theta 
\left[
\begin{array}{c}
\frac14 \\
\frac{7}{4}
\end{array}
\right] (\zeta, \tau), &\quad
%%%%%%%%%%%%%%
&\theta 
\left[
\begin{array}{c}
\frac14 \\
\frac{2}{4}
\end{array}
\right] (\zeta, \tau)
\theta 
\left[
\begin{array}{c}
\frac14 \\
\frac{6}{4}
\end{array}
\right] (\zeta, \tau),  \\ 
%%%%%%%%%%%%%%%%%%
&\theta 
\left[
\begin{array}{c}
\frac14 \\
\frac{3}{4}
\end{array}
\right] (\zeta, \tau)
\theta 
\left[
\begin{array}{c}
\frac14 \\
\frac{5}{4}
\end{array}
\right] (\zeta, \tau), &
%%%%%%%%%%%%%%%%%%
&\theta^2 
\left[
\begin{array}{c}
\frac14 \\
1
\end{array}
\right] (\zeta, \tau)
\in 
\mathcal{F}_{2}\left[
\begin{array}{c}
\frac24 \\
0
\end{array}
\right].  
\end{alignat*}
Therefore, 
there exists complex numbers $x_1, x_2, x_3, x_4,$ not all zero such that  
\begin{align*}
&x_1\theta 
\left[
\begin{array}{c}
\frac14 \\
\frac{1}{4}
\end{array}
\right] (\zeta, \tau)
\theta 
\left[
\begin{array}{c}
\frac14 \\
\frac{7}{4}
\end{array}
\right] (\zeta, \tau) 
%%%%%%%%%%%%%%
+x_2\theta 
\left[
\begin{array}{c}
\frac14 \\
\frac{2}{4}
\end{array}
\right] (\zeta, \tau)
\theta 
\left[
\begin{array}{c}
\frac14 \\
\frac{6}{4}
\end{array}
\right] (\zeta, \tau) \\ 
%%%%%%%%%%%%%%%%%%
&\hspace{30mm}+x_3\theta 
\left[
\begin{array}{c}
\frac14 \\
\frac{3}{4}
\end{array}
\right] (\zeta, \tau)
\theta 
\left[
\begin{array}{c}
\frac14 \\
\frac{5}{4}
\end{array}
\right] (\zeta, \tau) 
%%%%%%%%%%%%%%%%%%
+x_4\theta^2 
\left[
\begin{array}{c}
\frac14 \\
1
\end{array}
\right] (\zeta, \tau)=0.
\end{align*}
\par
Substituting 
$$
\zeta=\frac{3\tau\pm3}{8},  \,\, \frac{3\tau\pm2}{8}, \,\, \frac{3\tau\pm 1}{8},  \,\,\frac{3\tau}{8}, 
$$
we obtain 
\begin{equation}
\label{eqn:system-A}
A \mathbf{x}=\mathbf{0},
\end{equation}
where 
\begin{equation*}
A
=
\begin{pmatrix}
0 & 
%%%%%%%%%%%%%%%%%
\theta 
\left[
\begin{array}{c}
1 \\
\frac{1}{4}
\end{array}
\right] 
\theta 
\left[
\begin{array}{c}
1 \\
\frac{3}{4}
\end{array}
\right] & 
%%%%%%%%%%%%%%%%%%
\theta 
\left[
\begin{array}{c}
1 \\
0
\end{array}
\right] 
\theta 
\left[
\begin{array}{c}
1 \\
\frac{1}{2}
\end{array}
\right] &
%%%%%%%%%%%%%%%%%%%%%
\theta^2 
\left[
\begin{array}{c}
1 \\
\frac{1}{4}
\end{array}
\right]   \\
%%%%%%%%%%%%%%%%%%%%%
%%%%%%%%%%%%%%%%%%%%%
-\theta 
\left[
\begin{array}{c}
1 \\
\frac{1}{4}
\end{array}
\right] 
\theta 
\left[
\begin{array}{c}
1 \\
\frac{3}{4}
\end{array}
\right] &
%%%%%%%%%%%%%
0 & 
%%%%%%%%%%%%%%
\theta 
\left[
\begin{array}{c}
1 \\
\frac{1}{4}
\end{array}
\right] 
\theta 
\left[
\begin{array}{c}
1 \\
\frac{3}{4}
\end{array}
\right] & 
%%%%%%%%%%%%%%%
\theta^2 
\left[
\begin{array}{c}
1 \\
\frac{1}{2}
\end{array}
\right]   \\
%%%%%%%%%%%%%%%%%%%%%%%%
%%%%%%%%%%%%%%%%%%%%%%%%
-\theta 
\left[
\begin{array}{c}
1 \\
0
\end{array}
\right] 
\theta 
\left[
\begin{array}{c}
1 \\
\frac{1}{2}
\end{array}
\right] & 
%%%%%%%%%%%%%%%%%%%%%%%%
-\theta 
\left[
\begin{array}{c}
1 \\
\frac{1}{4}
\end{array}
\right] 
\theta 
\left[
\begin{array}{c}
1 \\
\frac{3}{4}
\end{array}
\right] & 
%%%%%%%%%%%%%%%%%%%%
0 & 
%%%%%%%%%%%%%%%%%
\theta^2
\left[
\begin{array}{c}
1 \\
\frac{3}{4}
\end{array}
\right] \\
%%%%%%%%%%%%%%%%%%%
%%%%%%%%%%%%%%%%%%
-
\theta^2
\left[
\begin{array}{c}
1 \\
\frac{1}{4}
\end{array}
\right] & 
%%%%%%%%%%%%%%%%%%%
-
\theta^2
\left[
\begin{array}{c}
1 \\
\frac{1}{2}
\end{array}
\right] & 
%%%%%%%%%%%%%%%%%%
-
\theta^2
\left[
\begin{array}{c}
1 \\
\frac{3}{4}
\end{array}
\right] & 
0 
\end{pmatrix},
\end{equation*}
and 
$\mathbf{x}= {}^{t} (x_1, x_2, x_3, x_4).$ 
\par
Since a system of equations (\ref{eqn:system-A}) has a nontrivial solution $\mathbf{x}\neq \mathbf{0},$ 
it follows that $\det A=0,$ 
which proves equation (\ref{theta-constant-identity(1)}). 
\par
Equations (\ref{theta-constant-identity(2)}) and (\ref{theta-constant-identity(3)}) can be proved by considering that 
\begin{alignat*}{2}
&\theta^2 
\left[
\begin{array}{c}
\frac14 \\
0
\end{array}
\right] (\zeta, \tau), 
&\quad
%%%%%%%%%%%%%%
&\theta 
\left[
\begin{array}{c}
\frac14 \\
\frac{2}{4}
\end{array}
\right] (\zeta, \tau)
\theta 
\left[
\begin{array}{c}
\frac14 \\
\frac{6}{4}
\end{array}
\right] (\zeta, \tau),  \\ 
%%%%%%%%%%%%%%%%%%
&\theta 
\left[
\begin{array}{c}
\frac14 \\
\frac{3}{4}
\end{array}
\right] (\zeta, \tau)
\theta 
\left[
\begin{array}{c}
\frac14 \\
\frac{5}{4}
\end{array}
\right] (\zeta, \tau), &
%%%%%%%%%%%%%%%%%%
&\theta^2 
\left[
\begin{array}{c}
\frac14 \\
1
\end{array}
\right] (\zeta, \tau)
\in 
\mathcal{F}_{2}\left[
\begin{array}{c}
\frac24 \\
0
\end{array}
\right], 
\end{alignat*}
and that
\begin{alignat*}{2}
&\theta^2 
\left[
\begin{array}{c}
\frac14 \\
0
\end{array}
\right] (\zeta, \tau), 
&\quad
%%%%%%%%%%%%%%
&\theta 
\left[
\begin{array}{c}
\frac14 \\
\frac{1}{4}
\end{array}
\right] (\zeta, \tau)
\theta 
\left[
\begin{array}{c}
\frac14 \\
\frac{7}{4}
\end{array}
\right] (\zeta, \tau),  \\ 
%%%%%%%%%%%%%%%%%%
&\theta 
\left[
\begin{array}{c}
\frac14 \\
\frac{3}{4}
\end{array}
\right] (\zeta, \tau)
\theta 
\left[
\begin{array}{c}
\frac14 \\
\frac{5}{4}
\end{array}
\right] (\zeta, \tau), &
%%%%%%%%%%%%%%%%%%
&\theta^2 
\left[
\begin{array}{c}
\frac14 \\
1
\end{array}
\right] (\zeta, \tau)
\in 
\mathcal{F}_{2}\left[
\begin{array}{c}
\frac24 \\
0
\end{array}
\right].  
\end{alignat*}
\end{proof}

\subsubsection*{Remark}
Since matrix $A$ is a skew-symmetric matrix, 
it follows that equation (\ref{theta-constant-identity(1)}) is expressed in terms of a Pfaffian. 
In the same way, 
equations (\ref{theta-constant-identity(2)}) and (\ref{theta-constant-identity(3)}) can be given in terms of Pfaffian. 
We note that 
in \cite{Matsuda-2} we derived some theta constant identities by considering the determinant structure of a matrix of the theta constants.

%\begin{acknowledgements}
%If you'd like to thank anyone, place your comments here
%and remove the percent signs.
%\end{acknowledgements}

% BibTeX users please use one of
%\bibliographystyle{spbasic}      % basic style, author-year citations
%\bibliographystyle{spmpsci}      % mathematics and physical sciences
%\bibliographystyle{spphys}       % APS-like style for physics
%\bibliography{}   % name your BibTeX data base

% Non-BibTeX users please use

\end{document}